\newtheorem{thm}{Theorem}
\newtheorem{lem}{Lemma}
\newtheorem{cor}{Corollary}
\theoremstyle{remark}
\theoremstyle{definition}
 \theoremstyle{definition}
 \theoremstyle{remark}
 \numberwithin{equation}{section}
\newcommand{\NN}{{\mathbb N}}
\def\bege{\begin{equation}} \def\ende{\end{equation}}
   \def\begr{\begin{eqnarray}}
\def\endr{\end{eqnarray}} 
\def\bege{\begin{equation}} \def\ende{\end{equation}}
\def\begr{\begin{eqnarray}} \def\endr{\end{eqnarray}}
\def\bnum{\begin{enumerate}} \def\enum{\end{enumerate}}
\begin{document}
\title{\sf Vanishing Carleson measures and power compact  weighted composition operators }
\thanks{The first author is thankful to  CSIR(India), Grant Number (File no. 09/1231(0001)/2019-EMR-I).\\
The second author is  thankful to  DST(SERB) for the project grant under MATRICS scheme. Grant No. MTR/2018/000479.}

\keywords { weighted composition operator, Carleson measure, power bounded, power compact and admissible weight.}
\subjclass[2020 MSC]{ 47B38, 47B33, 46E10, 46B50}

\author{Aakriti
 Sharma}

\address{Department of Mathematics,
Central University of Jammu,
Bagla,
Rahya-Suchani, Samba 181143,
INDIA }
\email{aakritishma321@gmail.com}

\author{Ajay K.~Sharma}
\address{Department of Mathematics,
Central University of Jammu,
Bagla,
Rahya-Suchani, Samba 181143,
INDIA }
\email{aksju\_76@yahoo.com}

\author{M. Mursaleen}
\address{Department of Mathematics, Aligarh Muslim University, Aligarh 202002,  India}
\email{mursaleenm@gmail.com}

\maketitle

\begin{abstract}
 In this paper, we characterize Carleson measure and vanishing Carleson measure on Bergman spaces with admissible weights in terms of {\it t-Berezin transform}  and  {\it averaging function}  as key tools. Moreover, power bounded and power compact weighted composition operators are characterized as application of Carleson measure and vanishing Carleson measure respectively on Bergman spaces with admissible weights.\\

 \end{abstract}

\section{Introduction}
\label{intro}
 Let $\mathcal{H}(\mathbb{D}) $ denote the space of analytic functions on the unit disk $\mathbb{D}=\{z{\in} \mathbb{C}:|z|<1\}$. Given a positive integrable function $\sigma\in C^{2}[0,1]$, we extend it on $\mathbb D$ by defining $\sigma(z)=\sigma(|z|), z\in \mathbb{D}$ and call such $\sigma$ a weight function.\\
 For $0<p<{\infty}$ and a positive Borel measure $\Omega$, the space $L^{p}({\Omega})$ consists of all measurable functions $f$ on $\mathbb{D}$ for which $$\|f\|^p_{L^{p}({\Omega})}=\int_{\mathbb{D}}|f(z)|^{p}d\Omega(z)<\infty .$$
 In the case $p=\infty$, the space of all complex-valued measurable functions $f$ on $\mathbb{D}$ is defined as $$L^{\infty}({\Omega})=\{f\in\mathcal{H}(\mathbb{D}):\|f\|_{\infty}={\rm ess\: sup}_{z \in \mathbb D}|f(z)|<\infty\}, $$ where the essential supremum is taken with respect to the measure $\Omega.$
 A sequence $\{f_n\}_{n\in\NN}$ is norm bounded in $L^{\infty}({\Omega})$ if ${\rm sup}_{n \in \mathbb N}\|f_n\|_{\infty}$ is finite. Let $dA(z)=\frac{dxdy}{\pi}$ be the normalized Lebesgue area measure on $\mathbb{D}$,
 we define the weighted Bergman space as $$A_{\sigma}^{p}=\{f\in\mathcal{H}(\mathbb{D}):\|f\|_{A_{\sigma}^{p}}^{p}
 =\int_{\mathbb{D}}|f(z)|^{p}\sigma{(z)}dA(z)<\infty\} .$$
 Note that $A_{\sigma}^{2}$ is a closed subspace of $L^2(\sigma dA)$ and hence is a Hilbert space
endowed with the inner product $$\langle f,g\rangle=\int_{\mathbb{D}}f(z)\overline{g(z)}\sigma(z)dA(z)\;\;\;\;f,g\in A_{\sigma}^{2}.$$\\
 Throughout this paper, we will consider $\sigma$ as admissible weight function. Recall that if a weight function $\sigma$ is non-increasing on $[0,1)$ and $\sigma(r)(1-r)^{-(1+\delta)}$ is non decreasing on $[0,1)$ for some $\delta>0$, then $\sigma$ is called admissible weight.\\
We refer the readers \cite{ZK} for useful fact over pseudohyperbolic metric. The pseudohyperbolic metric is defined as $\rho(a,z)= |\phi_{a}(z)|$, where $\phi_{a}(z)= \frac{a-z}{1-\bar{a}z}$ is m\"{o}bius transformation. For $r$ in $(0,1)$ and $a$ in $\mathbb{D}$, $E(a,r)=\{z\in \mathbb{D}:\rho(z,a)<r\}=\phi_a(E(0,r))=\phi_a(\{z:|z|<r\})$ denote the pseudohyperbolic disk centre at $a$ and radius $r\in(0,1)$. It turns out $E(a,r)$ is a Euclidean disk with center $\dfrac{1-r^{2}}{1-|z|^{2}r^2}z$ and radius $\dfrac{(1-|z|^{2})r}{1-|z|^{2}r^2}$.\\
 For every $z\in E(a,r)$, $(1-|a|^{2})^{2}\asymp (1-|z|^{2})^{2}\asymp |{1-\bar{a}z}|^{2}$ and area of $E(a,r)$ is denoted by $|E(a,r)|$ and $|E(a,r)|\asymp (1-|a|^{2})^{2}$ are well known facts. Here the symbol $"\asymp"$ denotes that the left hand side is bounded above and below by constant multiples of the right hand side, where the constants are positive and independent of variables. Given $r$ in $(0,1)$, a sequence $\{z_{k}\}_{k=1}^{\infty}\subset\mathbb{D}$ is said to be an \textit{r-lattice} if the disk $\{E(z_k,r)\}_{k=1}^{\infty}$ cover $\mathbb{D}$ and there is some integer $M>0$ such that each $z$ in $\mathbb{D}$ belongs to at most $M$ of the disks $\{E(z_k,\frac{1+r}{2})\}_{k=1}^{\infty}$. Equivalently, \begin{equation}\label{a1}1\leq\sum_{k=1}^{\infty}\chi_{E(z_{k},\frac
{1+r}{2})}(z)\leq M.\end{equation}
Recall that, $$K^{\alpha}(z,w)=\dfrac{1}{(1-\bar{w}z)^{\alpha+2}},\;\;\;z\in \mathbb{D}$$ is the reproducing kernel in $\textit{standard weighted Bergman space}$ $A_{\sigma}^{p}=A_{\alpha}^{p}$, where standard weight $\sigma(z)=(1-|z|^{2})^{\alpha+2}$, $\alpha>-1$. The normalized reproducing kernel of $A_{\alpha}^{p}$ is defined as $$k^{\alpha}_{z}(.)=\dfrac{K^{\alpha}(z,.)}{\sqrt{K^{\alpha}(z,z)}}$$ where $K^{\alpha}(z,z)=\dfrac{1}{(1-|z|^{2})^{\alpha+2}}$.\\
  For a finite positive Borel measure $\Omega$ on $\mathbb{D}$, the \textit{t-Berezin transform} is defined to be  \begin{equation}\label{a2} \widetilde{\Omega}_{t}(z)=\int_{\mathbb{D}}\left(|k_z^{\alpha}(w)|\right)^{t}d\Omega(w),\;\;\;z\in \mathbb{D}\end{equation}
Note that for $t=2$, the classical Berezin transform is denoted by $\widetilde{\Omega}_{2}$. Given $r$ in $(0,1)$, the Averaging function of $\Omega$ is defined to be\begin{equation}\label{a3}\widehat{\Omega}_{r}(z)=\frac{\Omega(E(z,r))}
{|E(z,r)|},\;\;\;z\in \mathbb{D}\end{equation}
If we set $d\Omega=fdA$, for a Lebesgue measurable function $f$, then we can write $\widetilde{f_{t}}=\widetilde{\Omega}_{t}$ and $\widehat{f}_{r}=\widehat{\Omega}_{r}$ for simplicity.\\
Motivated by \cite{HLZ} and \cite{KH}, in this article we study the power bounded and power compact weighted composition operator on $A_{\sigma}^{p}$ by using Carleson measure characterization in terms of $\textit{t-Berezin transform}$ and $\textit{averaging function}$ as a key tool. An operator $T$ on a normed linear space $(X,\|.\|_{X})$  is called ${\textit{power bounded}}$ if $\{T^{n}\}$ is a bounded sequence in the space of all bounded operators from $X$ to itself. Also, recall that an operator $T$ on Banach space $(X,\|.\|_{X})$ is said to be ${\textit{power compact}},$ see \cite{DGT} if there exist some integer $m>0$ such that $T^{m}$ is compact from $X$ to itself. Denote by $\Lambda^2(\mathbb C)$, the linear space   of all double sequences with complex entries.  A double sequence $\{\gamma_{j, k}\}_{j,  k \in \mathbb N}$ of complex numbers  is bounded if there exists some $M > 0$ such that $\sup_{j, k }|\gamma_{j,k}| \leq M$. The space $\Lambda^2_\infty $  of all  bounded
double sequences is defined as
$$\Lambda^2_\infty = \{\gamma_{jk} = \{\gamma_{j, k}\}_{j,  k \in \mathbb N} \in \Lambda^2(\mathbb C) : \|\gamma_{jk}\|_{\Lambda^2_\infty} = \sup_{j, k }|\gamma_{j,k}| < \infty\}.$$
 \\
Let $C_{\psi,\phi}$ denoted the well known $\textit{weighted composition operator}$ on the space $\mathcal{H}(\mathbb{D})$ is defined as $$C_{\psi,\phi}(f)=\psi {(f\circ{\phi})} $$
where $\psi \in \mathcal{H}(\mathbb{D})$ and $\phi$ is an analytic self map of $\mathbb{D}$. If $\phi(z)=z$ and $\psi=1$, then $C_{\psi,\phi}$ becomes the multiplication operator $M_{\psi}$ and the composition operator $C_{\phi}$ respectively. Denote by $\phi_n$ the $n$th iteration of $\phi,$ that is $$\phi_n = \underbrace{\phi \circ \phi \cdots \phi}_{n-\text{times}}. $$
Note that any power of $C_{\psi,\phi}$ on $\mathcal{H}(\mathbb{D})$ is a weighted composition operators which is defined as
   $$ C_{\psi,\phi}^{n}f =\prod_{j=0}^{n-1}(\psi\circ \phi_{j})f\circ \phi_n.$$ For the sake of simplicity, we set $$ \langle\psi,\phi,n\rangle=\prod_{j=0}^{n-1}\psi\circ \phi_{j} . $$ Thus, $$\|C_{\psi,\phi}^{n}f\|_{L^{p}({\Omega})}^{p}=\int_\mathbb{D}|\langle\psi,\phi,n\rangle(z)f\circ \phi_n(z)|^{p}d\Omega(z).$$ We define $d\Omega_{n}=|\langle\psi,\phi,n\rangle |d\Omega\circ\phi^{-1}_{n}$,   one can easily see that $\Omega_n$ is a measure and    therefore $$\|C_{\psi,\phi}^{n}f\|_{L^{p}({\Omega})}^{p}=\int_\mathbb{D}|f(z)|^{p}d\Omega_{n}$$.For $t>0$, the $t$-Berezin transform and for $0<r<1$, the averaging function of $\Omega_n$ are defined as $$\widetilde{\Omega}_{n,t}(z)=\int_\mathbb{D}\big(|k_z^{\alpha}(w)|\big)^{t}d\Omega_n(z),\;\;\;z\in\mathbb{D}$$
and $$\widehat{\Omega}_{n,r}(z)=\dfrac{\Omega_n( E(z,r))}{|E(z,r)|},\;\;\;z\in\mathbb{D}$$ respectively. For more about weighted composition operators, Carleson measures and vanishing Carleson measures, we refer to \cite{MMD}-\cite{ZK}.
Throughout the paper, the expression $E\lesssim F$ means that there exists a constant $C$ such that $E\le CF$.

\section{Preliminaries}

In this section, we  prove and  collect some useful facts  and lemmas  that are required for the proof of our main results.
 The next lemma gives a growth estimate for functions in $A_{\sigma}^{p}$ and an asymptotic estimate for norm of $K^{\alpha}(z, \cdot)$ already proved in \cite{SAU}.
\begin{lem}\label{lem1} Let $p>0$, $\sigma$ be an admissible weight and $\alpha > -1$. Then
\begin{enumerate}
\item  for each $z\in \mathbb{D},$ we have that
\begin{equation} |f(z)|^{p} \lesssim \dfrac{\|f\|_{A_{\sigma}^{p}}^{p}}{\sigma(z)(1-|z|^{2})^{2}} \; \text{ for all }  \;f \in A_{\sigma}^{p}.
\end{equation}
\item  for each $z\in \mathbb{D},$ we have that
\begin{equation} \|K^{\alpha}(z, \cdot)\|_{A_{\sigma}^{p}} \asymp \dfrac{1}{(\sigma(z))^{\frac{1}{p}}(1-|z|^{2})^{(\alpha +2)-\frac{2}{p}}}
\end{equation}
\end{enumerate}
\end{lem}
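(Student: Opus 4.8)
The plan is to prove both growth/norm estimates by reducing to the known standard-weight case and then transferring via the admissibility hypotheses on $\sigma$. The key point is that admissibility gives two-sided control on the size of $\sigma$ on each pseudohyperbolic disk: since $\sigma$ is non-increasing while $\sigma(r)(1-r)^{-(1+\delta)}$ is non-decreasing, one shows that $\sigma(w)\asymp\sigma(z)$ whenever $w\in E(z,r)$, with comparison constants depending only on $r$ and $\delta$. This is the structural fact that makes $\sigma$ behave locally like a standard weight $(1-|z|^2)^{\beta}$, and it is the real content behind the estimates.

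\textbf{Part (1).} I would start from the subharmonicity of $|f|^p$ for $f\in\mathcal H(\mathbb D)$ and apply the sub-mean-value inequality over the pseudohyperbolic disk $E(z,r)$ for a fixed $r\in(0,1)$:
\begin{equation}
|f(z)|^{p}\lesssim\frac{1}{|E(z,r)|}\int_{E(z,r)}|f(w)|^{p}\,dA(w).
\end{equation}
Using $|E(z,r)|\asymp(1-|z|^2)^2$ together with the local comparison $\sigma(w)\asymp\sigma(z)$ on $E(z,r)$, I replace $dA(w)$ by $\sigma(w)\,dA(w)$ at the cost of a factor $\sigma(z)^{-1}$, and then bound the resulting integral by the full norm $\|f\|_{A_\sigma^p}^p$. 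This yields
\begin{equation}
|f(z)|^{p}\lesssim\frac{\|f\|_{A_\sigma^p}^{p}}{\sigma(z)(1-|z|^2)^{2}},
\end{equation}
which is exactly (1).

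\textbf{Part (2).} Here I would compute $\|K^\alpha(z,\cdot)\|_{A_\sigma^p}^p=\int_{\mathbb D}|1-\bar z w|^{-(\alpha+2)p}\,\sigma(w)\,dA(w)$. For the upper bound I split $\mathbb D$ into the region near $z$, where $\sigma(w)\asymp\sigma(z)$ and the standard integral $\int|1-\bar z w|^{-(\alpha+2)p}\,dA(w)\asymp(1-|z|^2)^{2-(\alpha+2)p}$ (a Forelli–Rudin type estimate, valid when $(\alpha+2)p>1$) controls the size, and a complementary region that contributes a lower-order term absorbed by admissibility. The matching lower bound comes by restricting the integral to $E(z,r)$, where $|1-\bar z w|\asymp(1-|z|^2)$ and $\sigma(w)\asymp\sigma(z)$, giving a term of the same order. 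Taking $p$-th roots produces
\begin{equation}
\|K^\alpha(z,\cdot)\|_{A_\sigma^p}\asymp\frac{1}{\sigma(z)^{1/p}(1-|z|^2)^{(\alpha+2)-2/p}}.
\end{equation}

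The main obstacle I anticipate is controlling the integral in Part (2) away from the diagonal: the standard Forelli–Rudin asymptotics are stated for pure power weights, and here the weight is only an admissible $\sigma$, so the decay of $\sigma(w)$ as $|w|\to1$ must be shown not to overwhelm the singularity of $|1-\bar z w|^{-(\alpha+2)p}$. The admissibility condition $\sigma(r)(1-r)^{-(1+\delta)}$ non-decreasing provides a lower bound $\sigma(w)\gtrsim(1-|w|)^{1+\delta}$ on the decay rate, which is the ingredient needed to carry the integral and to ensure the near-diagonal contribution dominates. Everything else is a routine change of region and application of the local weight-comparison lemma. Since this result is attributed to \cite{SAU}, I would cite that reference and present only this proof sketch rather than the full computation.
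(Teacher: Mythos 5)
The paper supplies no proof of this lemma at all---it is stated with the remark that it is ``already proved in \cite{SAU}''---so there is nothing internal to compare your argument against; I can only judge the sketch on its own terms. Your structural observation that admissibility forces $\sigma(w)\asymp\sigma(z)$ for $w\in E(z,r)$ (from $\sigma$ non-increasing on one side and the monotonicity of $\sigma(r)(1-r)^{-(1+\delta)}$ on the other, together with $1-|w|\asymp1-|z|$ on $E(z,r)$) is correct and is indeed the crux; with it, your proof of Part (1) by the sub-mean-value property of $|f|^p$ and $|E(z,r)|\asymp(1-|z|^2)^2$ is complete and standard.

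Part (2) has a genuine gap, concentrated exactly in the step you describe as ``a complementary region that contributes a lower-order term absorbed by admissibility.'' First, the threshold you quote is wrong: $\int_{\mathbb D}|1-\bar zw|^{-c}\,dA(w)\asymp(1-|z|^2)^{2-c}$ requires $c>2$, not $c>1$ (for $c<2$ the integral is bounded above and below by constants, and for $c=2$ it is logarithmic). Consequently the off-diagonal contribution is \emph{not} always lower order: take $\sigma\equiv1$, which is admissible, and $(\alpha+2)p<2$ (e.g.\ $\alpha=-1/2$, $p=1$); then $\|K^\alpha(z,\cdot)\|_{A_\sigma^p}^p\asymp1$ while the claimed right-hand side $(1-|z|^2)^{2-(\alpha+2)p}$ tends to $0$, so the asserted upper bound fails and no proof can close this step without a hypothesis of the form ``$(\alpha+2)p$ sufficiently large relative to $\delta$'' (which is how such estimates are stated in \cite{SAU}). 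Second, the admissibility input you invoke for the off-diagonal estimate is the wrong one: to bound $\int\sigma(w)|1-\bar zw|^{-(\alpha+2)p}\,dA(w)$ from above you need an \emph{upper} bound on $\sigma(w)$, namely $\sigma(w)\le\sigma(z)\bigl(\tfrac{1-|w|}{1-|z|}\bigr)^{1+\delta}$ for $|w|\le|z|$ and $\sigma(w)\le\sigma(z)$ for $|w|\ge|z|$; the lower bound $\sigma(w)\gtrsim(1-|w|)^{1+\delta}$ that you cite contributes nothing to an upper estimate of the integral. Your lower bound for the norm, by restricting to $E(z,r)$, is fine.
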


\begin{lem} \label{lem2}Let $1\leq p\leq{\infty}$ and $r\in(0,1)$. Then the operator $f\longmapsto\widehat{f}_{r}$ is bounded from $L^{p}(\mathbb{D})$ to $L^{p}(\mathbb{D})$.\end{lem}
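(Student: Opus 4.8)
The plan is to handle the two endpoints $p=\infty$ and $p=1$ directly and then cover the entire range $1\le p<\infty$ at once by a convexity argument, so that no interpolation machinery is required. Writing $d\Omega=f\,dA$, the averaging function is the local mean
$$\widehat{f}_r(z)=\frac{1}{|E(z,r)|}\int_{E(z,r)}f(w)\,dA(w),$$
and for $p=\infty$ the estimate $|\widehat{f}_r(z)|\le\|f\|_{\infty}$ is immediate, so the operator is bounded with norm at most $1$.

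For $1\le p<\infty$ I would first note that $dA/|E(z,r)|$, restricted to $E(z,r)$, is a probability measure, so Jensen's inequality applied to the convex function $t\mapsto|t|^{p}$ yields the pointwise bound
$$|\widehat{f}_r(z)|^{p}\le\frac{1}{|E(z,r)|}\int_{E(z,r)}|f(w)|^{p}\,dA(w).$$
Integrating in $z$ over $\mathbb{D}$ and invoking Fubini's theorem, the resulting double integral rearranges into $\int_{\mathbb{D}}|f(w)|^{p}\,I(w)\,dA(w)$, where $I(w)=\int_{\{z:\,w\in E(z,r)\}}|E(z,r)|^{-1}\,dA(z)$ collects the overlap contribution of all the disks containing $w$.

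The crux is to bound $I(w)$ uniformly in $w$, and this is where the preliminary facts enter. The pseudohyperbolic metric is symmetric, so $w\in E(z,r)\iff z\in E(w,r)$, which turns the domain of $I(w)$ into $E(w,r)$; and for $z\in E(w,r)$ one has $|E(z,r)|\asymp(1-|z|^{2})^{2}\asymp(1-|w|^{2})^{2}\asymp|E(w,r)|$. Hence $I(w)\asymp|E(w,r)|^{-1}\int_{E(w,r)}dA(z)=1$, giving $I(w)\le C$ with $C$ independent of $w$. Substituting this back produces $\|\widehat{f}_r\|_{L^{p}}^{p}\lesssim\|f\|_{L^{p}}^{p}$, which is the assertion. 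The main obstacle is exactly this uniform control of the overlap integral $I(w)$; once the symmetry of $\rho$ and the comparability $|E(z,r)|\asymp|E(w,r)|$ on $E(w,r)$ are in place, the remaining steps are routine.
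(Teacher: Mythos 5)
Your proof is correct, and it takes a slightly different route from the paper's. The paper establishes the endpoint cases $p=1$ (via the symmetry $\chi_{E(z,r)}(\xi)=\chi_{E(\xi,r)}(z)$ and Fubini) and $p=\infty$ (trivially), and then invokes complex interpolation to cover $1<p<\infty$. You instead prove all of $1\le p<\infty$ in one stroke: Jensen's inequality for the probability measure $dA/|E(z,r)|$ on $E(z,r)$ gives the pointwise bound $|\widehat{f}_r(z)|^{p}\le\widehat{(|f|^{p})}_r(z)$, after which your Fubini computation and the uniform bound on the overlap integral $I(w)=\int_{E(w,r)}|E(z,r)|^{-1}\,dA(z)\asymp 1$ (using the symmetry of $\rho$ and $|E(z,r)|\asymp(1-|z|^{2})^{2}\asymp(1-|w|^{2})^{2}\asymp|E(w,r)|$ for $z\in E(w,r)$) finish the argument; at $p=1$ this reduces exactly to the paper's computation. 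Your version is more elementary and self-contained, since it needs no interpolation theorem, at the modest cost of one extra appeal to convexity; the paper's version is shorter if one takes the Riesz--Thorin machinery for granted. Both are complete proofs with constants depending only on $r$.
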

\begin{proof}Since $\chi_{E(z,r)}(\xi)=\chi_{E(\xi,r)}(z),\;\;z,\xi\in \mathbb{D}$. By Fubini's theorem for all $f\in L^{1}(\mathbb{D})$, we have
\begin{align}\|\widehat{f}_{r}\|_{1}&\leq \int_{\mathbb{D}}\bigg|\frac{1}{|E(z,r)|}\int_{E(z,r)}|f(\xi)|dA(\xi)\bigg| dA(z)\notag \\
&\leq \int_{\mathbb{D}}\frac{dA(z)}{|E(z,r)|}\int_{E(z,r)}|f(\xi)|dA(\xi)\notag\\ &=\int_{\mathbb{D}}|f(\xi)|dA(\xi)\int_{E(\xi,r)}\frac{dA(z)}{|E(z,r)|}\notag\\ &\leq C\|f\|_{1}\notag
\end{align}
The boundedness of the operator $f\longmapsto\widehat{f}_{r}$  is trivially holds for $p=\infty$ , that is $\|\widehat{f}_{r}\|_{\infty}\leq \|f\|_{\infty}$ and also holds for $1< p< \infty$, by complex interpolation.
\end{proof}

\begin{lem} \label{lem3}Suppose $0<p<{\infty}$, $r\in (0,1)$ and $\Omega$ be a finite positive Borel measure. Then there exists some constant $C$ such that $$\int_{\mathbb{D}}h(z)d\Omega(z)\le C\int_{\mathbb{D}}h(z)\widehat{\Omega}_{r}(z)dA(z) $$for all non-negative subharmonic function $h:\mathbb{D}\longrightarrow [0,\infty)$.
\end{lem}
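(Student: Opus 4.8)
The plan is to combine the sub-mean value property of non-negative subharmonic functions with a Fubini argument, exploiting the symmetry $\chi_{E(z,r)}(w)=\chi_{E(w,r)}(z)$ of the pseudohyperbolic disks together with the comparability of their areas recorded in the introduction. Note at the outset that the hypothesis $0<p<\infty$ plays no role in the argument: only the subharmonicity and non-negativity of $h$ will be used.

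First I would establish a local mean value estimate, namely that there is a constant $C_1$, depending only on $r$, such that for every $w\in\mathbb{D}$,
$$h(w)\le \frac{C_1}{|E(w,r)|}\int_{E(w,r)}h(z)\,dA(z).$$
This is the step I expect to be the main obstacle. The difficulty is that the sub-mean value inequality for a subharmonic function is naturally phrased over a \emph{Euclidean} disk centred at the point $w$, whereas $E(w,r)$, although it is a Euclidean disk, has centre $\frac{1-r^2}{1-|w|^2r^2}\,w$ rather than $w$ itself. To circumvent this I would choose a Euclidean disk $D$ centred at $w$ with radius $\delta\asymp 1-|w|^2$, small enough that $D\subset E(w,r)$, apply the ordinary sub-mean value inequality over $D$, and then use the non-negativity of $h$ together with $|D|\asymp|E(w,r)|\asymp(1-|w|^2)^2$ to replace the average over $D$ by the average over the larger set $E(w,r)$ at the cost of an absolute constant.

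Next I would integrate this pointwise bound against $d\Omega$ and interchange the order of integration, which is legitimate by Tonelli's theorem since everything is non-negative. Writing the inner average as $\int_{\mathbb{D}}h(z)\chi_{E(w,r)}(z)\,dA(z)$ and using the symmetry $\chi_{E(w,r)}(z)=\chi_{E(z,r)}(w)$ gives
$$\int_{\mathbb{D}}h(w)\,d\Omega(w)\le C_1\int_{\mathbb{D}}\frac{1}{|E(w,r)|}\int_{E(w,r)}h(z)\,dA(z)\,d\Omega(w)=C_1\int_{\mathbb{D}}h(z)\int_{E(z,r)}\frac{d\Omega(w)}{|E(w,r)|}\,dA(z).$$

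Finally, for $w\in E(z,r)$ the comparability $|E(w,r)|\asymp|E(z,r)|\asymp(1-|z|^2)^2$ recorded earlier lets me pull $|E(w,r)|^{-1}$ out of the inner integral up to a multiplicative constant, so that
$$\int_{E(z,r)}\frac{d\Omega(w)}{|E(w,r)|}\asymp\frac{\Omega(E(z,r))}{|E(z,r)|}=\widehat{\Omega}_r(z).$$
Substituting this back yields $\int_{\mathbb{D}}h\,d\Omega\le C\int_{\mathbb{D}}h\,\widehat{\Omega}_r\,dA$ with $C$ depending only on $r$, which is the desired inequality.
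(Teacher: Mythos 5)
Your proposal is correct and follows essentially the same route as the paper: the sub-mean value inequality $h(w)\le \frac{C}{|E(w,r)|}\int_{E(w,r)}h\,dA$ (which the paper simply cites from Zhu's book, p.~125, while you sketch its proof via an inscribed Euclidean disk of comparable area), followed by Fubini/Tonelli, the symmetry $\chi_{E(w,r)}(z)=\chi_{E(z,r)}(w)$, and the comparability $|E(w,r)|\asymp|E(z,r)|$ for $w\in E(z,r)$ to recognize the inner integral as $\widehat{\Omega}_r(z)$. Your observation that the hypothesis $0<p<\infty$ is vacuous here is also accurate.
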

\begin{proof}Since $h$ be non negative subharmonic function $h:\mathbb{D}\longrightarrow [0,\infty)$. Then,  we have $$h(z)\le \frac{C}{|E(z,r)|}\int_{E(z,r)}h(\xi)dA(\xi)$$\;\;\; for all $z\in \mathbb{D}$,  see page no. 125 of \cite{ZK}. Using above inequality, Fubini's theorem and (\ref{a3}), we have that \begin{align*}\int_{\mathbb{D}}h(z)d\Omega(z)&\le C\int_{\mathbb{D}} \frac{d\Omega(z)}{|E(z,r)|}\int_{E(z,r)}h(\xi)dA(\xi)\\ &= C\int_{\mathbb{D}}h(\xi)dA(\xi)\int_{E(\xi,r)}\frac{d\Omega(z)}{|E(z,r)|}\\
&\le C\int_{\mathbb{D}}h(z)\widehat{\Omega}_{r}(z)dA(z)
\end{align*}
This accomplished the result.\end{proof}

Note that lemma \ref{l4} and lemma \ref{lem4} already prove in \cite{HLZ} and \cite{HZL} in more general sense. For the sake of completeness, we include the proofs. Note that lemma \ref{l4} reveals that the function $\Omega(E(z,r))$ behaves like a subharmonic function. For $r=R$, the following lemma, lemma \ref{l4} was first obtained in \cite{ZK1}.

\begin{lem}\label{l4} Suppose $r,R >0$ and $\Omega$ be a positive borel measure on $\mathbb{D}$, then there exists a constant $C$ such that
\begin{equation}\label{ee}
  \Omega(E(a,R))\le \dfrac{C}{|E(a,R)|}\int_{E(a,R)}\Omega(E(z,r))dA(z)
\end{equation}
\end{lem}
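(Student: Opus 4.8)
The plan is to express the right-hand side as an integral against $\Omega$ by Tonelli's theorem and then to reduce everything to a single scale-invariant estimate: a pseudohyperbolic disk $E(w,r)$ occupies a definite fraction of $E(a,R)$ whenever its center $w$ lies in $E(a,R)$. First I would write
$$\int_{E(a,R)}\Omega(E(z,r))\,dA(z)=\int_{E(a,R)}\int_{\mathbb{D}}\chi_{E(z,r)}(w)\,d\Omega(w)\,dA(z)$$
and exchange the order of integration. Using the symmetry $\chi_{E(z,r)}(w)=\chi_{E(w,r)}(z)$ (valid because $\rho$ is symmetric), the inner $dA$-integral becomes $|E(a,R)\cap E(w,r)|$, so that
$$\int_{E(a,R)}\Omega(E(z,r))\,dA(z)=\int_{\mathbb{D}}|E(a,R)\cap E(w,r)|\,d\Omega(w)\ge\int_{E(a,R)}|E(a,R)\cap E(w,r)|\,d\Omega(w),$$
where the last step merely discards the mass of $\Omega$ lying outside $E(a,R)$.

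The lemma is thereby reduced to the claim that there is a constant $c=c(r,R)>0$, independent of $a$, with $|E(a,R)\cap E(w,r)|\ge c\,|E(a,R)|$ for every $w\in E(a,R)$. Granting this, the previous display is bounded below by $c\,|E(a,R)|\,\Omega(E(a,R))$, and dividing by $|E(a,R)|$ gives the assertion with $C=1/c$. To prove the claim I would strip the dependence on $a$ via the Möbius change of variables $\phi_a$. Since $\phi_a$ maps $E(a,R)$ onto $E(0,R)=\{|z|<R\}$ and $E(w,r)$ onto $E(w',r)$ with $w'=\phi_a(w)$ and $|w'|=\rho(a,w)<R$, and since every point of $E(a,R)\cup E(w,r)$ lies in $E(a,R+r)$ (by the triangle inequality for $\rho$), where $|1-\bar az|\asymp 1-|a|^2$, the Jacobian $|\phi_a'|^2=(1-|a|^2)^2|1-\bar az|^{-4}\asymp(1-|a|^2)^{-2}$ is comparable to a single constant (depending only on $R,r$) throughout the relevant region. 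Hence ratios of areas are preserved up to constants, and
$$\frac{|E(a,R)\cap E(w,r)|}{|E(a,R)|}\asymp\frac{|E(0,R)\cap E(w',r)|}{|E(0,R)|},$$
so it suffices to bound the right-hand side below uniformly for $|w'|\le R$.

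Finally, I would observe that $w'\mapsto|E(0,R)\cap E(w',r)|$ is continuous on the compact set $\{|w'|\le R\}$ and strictly positive there: the Euclidean center of $E(w',r)$ has modulus $\frac{(1-r^2)|w'|}{1-|w'|^2r^2}<|w'|\le R$, so it lies inside $E(0,R)$ and the two open disks overlap in a set of positive area. A continuous positive function on a compact set has a positive lower bound, which furnishes $c(r,R)>0$ and closes the argument. I expect this last normalization-and-compactness step --- obtaining the overlap bound \emph{uniformly} in $a$ --- to be the main obstacle, since it is where the geometry of the pseudohyperbolic disks and the non-invariance of Lebesgue area must be reconciled; by contrast, the Tonelli reduction in the first paragraph is routine.
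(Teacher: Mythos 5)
Your proposal is correct and follows essentially the same route as the paper: Tonelli's theorem together with the symmetry $\chi_{E(z,r)}(w)=\chi_{E(w,r)}(z)$ reduces the lemma to the uniform overlap estimate $|E(a,R)\cap E(w,r)|\ge c(r,R)\,|E(a,R)|$ for all $w\in E(a,R)$. The only divergence is the final geometric step: the paper simply asserts that the intersection contains a disk of diameter $\tfrac12\min\{r,R\}$ (which, read literally as a Euclidean disk, cannot be right since $|E(a,R)|\to 0$ as $|a|\to 1$; a pseudohyperbolic disk is intended), whereas your Möbius normalization by $\phi_a$ followed by continuity and compactness on $\{|w'|\le R\}$ is a sound and more carefully justified substitute --- the one repair needed is to replace $E(a,R+r)$ by $E\bigl(a,\tfrac{R+r}{1+Rr}\bigr)$ via the pseudohyperbolic triangle inequality, so that the containing disk has radius strictly less than $1$ and the estimate $|1-\bar a z|\asymp 1-|a|^2$ is valid on it.
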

\begin{proof}
For $r,R>0$, we have
\begin{align*}\label{aa}
  \int_{E(a,R)}\Omega(E(z,r))dA(z) &= \int_{\mathbb{D}}\chi_{E(a,R)}(z)dA(z)
  \int_{\mathbb{D}}\chi_{E(z,r)}(w)d\Omega(w)\\
  &=\int_{\mathbb{D}}d\Omega(w)\int_{E(a,R)}\chi_{(E(z,r)}(w)dA(w)
\end{align*}

Since $\chi_{E(z,r)}(w)=\chi_{E(w,r)}(z)$ for all $z,w\in \mathbb{D}$, then
\begin{align*}
  \int_{E(a,R)}\Omega(E(z,r))dA(z) &= \int_{\mathbb{D}}d\Omega(w)\int_{E(a,R)}\chi_{E(w,r)}(z)dA(z) \\
 &\geq \int_{E(a,R)} |(E(a,R)\cap E(w,r))|d\Omega(w)  \\
  &\geq \Omega(E(a,R))\inf_{w\in E(a,R)}\{|(E(a,R)\cap E(w,r))|\}
\end{align*}
For $w\in (E(a,R)$, then there exists a Euclidean disk with diameter $\frac{1}{2}\min\{r,R\}$ contained in $(E(a,R)\cap E(w,r))$. Therefore (\ref{ee}) holds.

\end{proof}

\begin{lem} \label{lem4} Suppose $1 \le p \le {\infty}$ and $s\in \mathbb{R}$. Then $\dfrac{\widehat{\Omega}_{r}(z)}{\sigma^{s}(z)\left(1-|z|^{2}\right)^{2(s-1)}}\in L^{p}(\mathbb{D})$ for some $r$, $0<r<1$ if and only if $\dfrac{\widehat{\Omega}_{R}(z)}{\sigma^{s}(z)\left(1-|z|^{2}\right)^{2(s-1)}}\in L^{p}(\mathbb{D})$ for all $0<R<1$. Moreover, $\bigg \|\dfrac{\widehat{\Omega}_{r}(z)}{\sigma^{s}(z)\left(1-|z|^{2})\right)^{2(s-1)}}\bigg \|_{p}\asymp \bigg\|\dfrac{\widehat{\Omega}_{R}(z)}{\sigma^{s}(z)\left(1-|z|^{2}\right)^{2(s-1)}}\bigg \|_{p}$.
\end{lem}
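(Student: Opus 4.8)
The plan is to bypass the two-way ``some $r$ $\Leftrightarrow$ all $R$'' phrasing and instead prove the single quantitative statement that, for \emph{any} two radii $r,R\in(0,1)$, the weighted norms $\|\widehat{\Omega}_r/w\|_p$ and $\|\widehat{\Omega}_R/w\|_p$ are comparable, where I abbreviate $w(z)=\sigma^{s}(z)\left(1-|z|^2\right)^{2(s-1)}$. Once this is established, the equivalence of membership in $L^p(\mathbb D)$ and the asymptotic comparison of norms follow at once: ``for all $R$'' trivially yields ``for some $r$'', and the comparability promotes ``for some $r$'' back to ``for all $R$'' with controlled norm. Because $r$ and $R$ enter symmetrically, it suffices to establish the one-sided estimate $\|\widehat{\Omega}_R/w\|_p\lesssim\|\widehat{\Omega}_r/w\|_p$ with a constant depending only on $r$, $R$, $p$ and $s$, and then swap the two radii.

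First I would substitute Lemma~\ref{l4} into the definition of the averaging function. Since $\widehat{\Omega}_R(a)=\Omega(E(a,R))/|E(a,R)|$, the subharmonicity-type bound (\ref{ee}) gives $\widehat{\Omega}_R(a)\lesssim |E(a,R)|^{-2}\int_{E(a,R)}\Omega(E(z,r))\,dA(z)$. I would then write $\Omega(E(z,r))=|E(z,r)|\,\widehat{\Omega}_r(z)$ and invoke the geometric facts recorded in the introduction: for $z\in E(a,R)$ one has $|E(z,r)|\asymp(1-|z|^2)^2\asymp(1-|a|^2)^2\asymp|E(a,R)|$, with constants depending only on $r$ and $R$. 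These comparisons cancel the extra factor of $|E(a,R)|$ and leave the pointwise domination $\widehat{\Omega}_R(a)\lesssim|E(a,R)|^{-1}\int_{E(a,R)}\widehat{\Omega}_r(z)\,dA(z)$; that is, $\widehat{\Omega}_R$ is controlled by the averaging function over radius $R$ of the locally integrable function $\widehat{\Omega}_r$.

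Next I would divide by $w(a)$ and absorb the weight into the integral using the fact that $w$ is essentially constant on pseudohyperbolic disks. For $z\in E(a,R)$ we have $(1-|z|^2)^2\asymp(1-|a|^2)^2$, and, since $\sigma$ is admissible, $\sigma(z)\asymp\sigma(a)$; hence $w(z)\asymp w(a)$ with constants depending only on $R$ and $s$. This yields $\widehat{\Omega}_R(a)/w(a)\lesssim |E(a,R)|^{-1}\int_{E(a,R)}\big(\widehat{\Omega}_r(z)/w(z)\big)\,dA(z)$, which is precisely the radius-$R$ averaging function of $\widehat{\Omega}_r/w$ evaluated at $a$. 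Applying Lemma~\ref{lem2}, which asserts that $f\mapsto\widehat{f}_R$ is bounded on $L^p(\mathbb D)$ for $1\le p\le\infty$, and taking $L^p$-norms gives $\|\widehat{\Omega}_R/w\|_p\lesssim\|\widehat{\Omega}_r/w\|_p$. Interchanging the roles of $r$ and $R$ produces the reverse inequality, establishing both the equivalence and the norm comparison.

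I expect the main obstacle to be not any single deep step but the uniform bookkeeping of the $\asymp$ constants: one must check that each comparison $\big((1-|z|^2)^2\asymp(1-|a|^2)^2$, $|E(z,r)|\asymp|E(a,R)|$, and $\sigma(z)\asymp\sigma(a)\big)$ holds uniformly over all $a\in\mathbb D$ with constants independent of $a$. The genuinely weight-dependent point is the comparison $\sigma(z)\asymp\sigma(a)$ on pseudohyperbolic disks, which is where admissibility of $\sigma$ is used; this is a standard property of admissible weights and should be cited as such. The restriction $1\le p\le\infty$ is exactly what is needed to apply the boundedness of the averaging operator in Lemma~\ref{lem2}.
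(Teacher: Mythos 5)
Your proof is correct and follows essentially the same route as the paper's: both start from Lemma~\ref{l4}, use the comparability of $|E(z,r)|$, $(1-|z|^2)^2$ and the weight on pseudohyperbolic disks to reduce to the pointwise bound $G_R\lesssim\widehat{(G_r)}_R$, apply the $L^p$-boundedness of the averaging operator, and then swap $r$ and $R$. (You cite Lemma~\ref{lem2} for the last step, which is what the paper actually needs; its reference to Lemma~\ref{lem3} there appears to be a typo.)
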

\begin{proof} For $r$ and $R$, by lemma \ref{l1} there exists a constant $C$ such that $$\widehat{\Omega}_R(z)\le \dfrac{C}{|E(z,R)|}\int_{E(z,R)}\widehat{\Omega}_{r}(\xi)dA(\xi)$$ for all $z\in \mathbb{D}$. Above inequality and (\ref{a3}) implies that, \begin{align*}G_R(z) &\le\dfrac{C}{|E(z,R)|}
\int_{E(z,R)}G_r(\xi)dA(\xi)\\
&\asymp\widehat{G_{r}}(z), \end{align*}
where $G_l(z)=\frac{\widehat{\Omega}_{l}(z)}{\sigma^{s}(z)\left(1-|z|^{2}\right)^{2(s-1)}}$. Combining this with Lemma \ref{lem3}, we find that
$$\bigg \|\frac{\widehat{\Omega}_{R}(z)}{\sigma^{s}(z)\left(1-|z|^{2}\right)^{2(s-1)}}\bigg \|_{p}\le  \bigg \|\widehat{G_{r}}(z)\bigg \|_{p} \le\bigg \|G_r(z)\bigg \|_{p}\;\;\;\;\text{for}\; 1\le p\le \infty .$$
Interchanging the role of $r$ and $R$, we get the desired result.
\end{proof}

\begin{lem} \label{lem5}Suppose $1<p< \infty$ and $t>0$. Then the integral operator $$T_{t,s}f(z)=\sigma^{s}(z)\big(1-|z|^{2}\big)^{(\alpha +2)t-2s}\int_{\mathbb{D}}\dfrac{|K^{\alpha}(z,\xi)|^{t}}{\sigma^{s}(\xi)
\left(1-|\xi|^{2}\right)^{2(s-1)}}f(\xi)dA(\xi)$$ is bounded on $L^{p}(\mathbb{D})$ whenever $\frac{1-p}{p}<s<(\alpha +2)\frac{t}{2}+\frac{1}{p}$.
\end{lem}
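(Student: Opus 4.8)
The plan is to treat $T_{t,s}$ as an integral operator with a non-negative kernel and apply Schur's test on $L^{p}(\mathbb{D})$. Writing $n=\alpha+2$ and recalling that $|K^{\alpha}(z,\xi)|^{t}=|1-\bar\xi z|^{-nt}$, the kernel is
\[
H(z,\xi)=\frac{\sigma^{s}(z)\,(1-|z|^{2})^{nt-2s}\,(1-|\xi|^{2})^{2-2s}}{\sigma^{s}(\xi)\,|1-\bar\xi z|^{nt}},
\]
so that $T_{t,s}f(z)=\int_{\mathbb{D}}H(z,\xi)f(\xi)\,dA(\xi)$. Let $p'=p/(p-1)$ be the conjugate exponent. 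The goal is to produce a strictly positive weight $h$ and a constant $C$ with $\int_{\mathbb{D}}H(z,\xi)h(\xi)^{p'}\,dA(\xi)\le C\,h(z)^{p'}$ and $\int_{\mathbb{D}}H(z,\xi)h(z)^{p}\,dA(z)\le C\,h(\xi)^{p}$; Schur's test then gives boundedness on $L^{p}(\mathbb{D})$ with $\|T_{t,s}\|\le C$.

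The two tools driving the estimates are the Forelli--Rudin integral asymptotics
\[
\int_{\mathbb{D}}\frac{(1-|\xi|^{2})^{c}}{|1-\bar z\xi|^{d}}\,dA(\xi)\asymp (1-|z|^{2})^{\,c+2-d},\qquad c>-1,\ d-c-2>0,
\]
together with the control of $\sigma$ afforded by admissibility. Indeed, since $\sigma$ is non-increasing while $r\mapsto\sigma(r)(1-r)^{-(1+\delta)}$ is non-decreasing, one gets for $|\xi|\ge|z|$ the two-sided comparison $\sigma(z)\big((1-|\xi|^{2})/(1-|z|^{2})\big)^{1+\delta}\lesssim\sigma(\xi)\lesssim\sigma(z)$, with the inequalities reversed for $|\xi|\le|z|$. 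I would take the test function $h(z)=(1-|z|^{2})^{-\eta}$ with $\eta$ to be fixed below, substitute it into each Schur integral, and use the admissibility bounds to replace the quotient $\sigma^{\pm s}(\xi)/\sigma^{\pm s}(z)$ by powers of $1-|\xi|^{2}$ and $1-|z|^{2}$; what remains in each case is an integral of Forelli--Rudin type, which collapses to a single power of $1-|z|^{2}$, respectively $1-|\xi|^{2}$.

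The bookkeeping then reduces both Schur conditions to the requirement that $\eta$ satisfy two convergence constraints coming from the hypotheses $c>-1$ and $d-c-2>0$ of the Forelli--Rudin estimate --- one from the $\xi$-integral and one from the $z$-integral --- together with the matching of the resulting powers. These take the form $\ell(s,t,p)<\eta<u(s,t,p)$, and a direct computation shows that the interval for $\eta$ is nonempty precisely when $\tfrac{1-p}{p}<s<(\alpha+2)\tfrac{t}{2}+\tfrac{1}{p}$; choosing any such $\eta$ completes the argument. The main obstacle is the passage from the unweighted Forelli--Rudin estimate to its $\sigma$-weighted analogue: because an admissible $\sigma$ need not be comparable to a power of $1-|z|^{2}$ globally, I would split each integral into the pseudohyperbolic region where $\sigma(\xi)\asymp\sigma(z)$ --- which produces the main term --- and its complement, where the monotonicity bounds above show the contribution is dominated by the main term once the strict convergence conditions hold. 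Handling this tail uniformly, and verifying that the $\delta$-dependent losses are absorbed by the open interval of admissible $\eta$, is the delicate point; the remaining exponent arithmetic is routine.
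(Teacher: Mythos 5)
Your overall strategy --- Schur's test with a radial test function --- is exactly the paper's; the difference lies only in the choice of test function, and that is where the proposal has a genuine gap. The paper takes $h(\xi)=\big(\sigma(\xi)(1-|\xi|^{2})\big)^{m}$, so that each of the two Schur integrals is literally of the form $\int_{\mathbb D}\tilde\sigma(\xi)|K^{\alpha}(z,\xi)|^{t}dA(\xi)$ for another admissible weight $\tilde\sigma$, and is then evaluated with no loss by the weighted kernel-norm estimate of Lemma \ref{lem1}(2); the hypothesis on $s$ enters only through the non-emptiness of the window $(P,Q)\cap(U,V)$ for the exponent $m$. You instead take the pure power $h(\xi)=(1-|\xi|^{2})^{-\eta}$ and propose to trade the factors $\sigma^{\pm s}(\xi)/\sigma^{\pm s}(z)$ for powers of $(1-|\xi|^{2})/(1-|z|^{2})$ via admissibility, reducing everything to the unweighted Forelli--Rudin estimate.

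Two concrete problems. First, your reduction ``the interval for $\eta$ is nonempty precisely when $\frac{1-p}{p}<s<(\alpha+2)\frac t2+\frac1p$'' cannot be right, because with a pure power test function the matching of the resulting powers is independent of $\eta$. Test it on $\sigma\equiv1$, which is admissible: the first Schur integral is $(1-|z|^{2})^{(\alpha+2)t-2s}\int_{\mathbb D}(1-|\xi|^{2})^{2-2s-\eta p'}|1-\bar\xi z|^{-(\alpha+2)t}dA(\xi)\asymp(1-|z|^{2})^{4-4s-\eta p'}$, and comparison with $h(z)^{p'}=(1-|z|^{2})^{-\eta p'}$ makes the $\eta p'$ cancel, leaving the condition $4-4s\ge0$, i.e.\ $s\le1$; the second Schur inequality gives the same condition. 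This is neither implied by nor equivalent to the stated range of $s$ (and it is not a bookkeeping slip on your side: for $s>1$ and $\sigma\equiv1$ one checks on $f=\chi_{E(z_{0},r)}$ that $T_{t,s}$ is actually unbounded, so no choice of $\eta$ can rescue the pure-power Schur test there). The constraints $c>-1$ and $d-c-2>0$ do restrict $\eta$, but they are not what decides whether the test closes. Second, even where the matching holds, the admissibility bounds you invoke cost a factor $\big((1-|z|)/(1-|\xi|)\big)^{|s|(1+\delta)}$ on the off-diagonal region, and $\delta$ is whatever the given weight admits --- it cannot be decreased, since non-decrease of $\sigma(r)(1-r)^{-(1+\delta)}$ does not pass to smaller $\delta$ (e.g.\ $\sigma(r)=(1-r)^{1+\delta}$) --- whereas the room in your $\eta$-window depends only on $s,t,p,\alpha$. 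So for $s\neq0$ and $\delta$ large the tail estimate you sketch does not close; making it close amounts to re-proving the $\sigma$-weighted Forelli--Rudin estimate, which is exactly what the paper imports as Lemma \ref{lem1}(2) and builds into the test function from the start.
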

\begin{proof}
Let $P=\frac{1-p}{p}(s+1)$, $Q= \frac{p-1}{p}\{(\alpha +2)t - {2}-s\}$, $U=\frac{s+1-(\alpha +2)t}{p}$ and $V= \frac{s}{p}$. Then claim that the intervals $(P,Q)$ and $(U,V)$ are non-empty. By using hypothesis one can easily find that
\begin{align*}
  Q-P &=(\alpha+2)(1-\frac{1}{p})(t-\frac{1}{\alpha+2}) >0
\end{align*}
and \begin{align*}
         V-U &=\frac{(\alpha+2)}{p}(t-\frac{1}{\alpha+2})>0
         \end{align*}
          Implies that the intervals $(P,Q)$ and $(U,V)$ are non-empty. Also,  \begin{align*}
                                           V-P &=s-\frac{1-p}{p}>0
                                         \end{align*}
          and \begin{align*}
                  Q-U &=(\alpha+2)t-2+\frac{1}{p}-s>0
                \end{align*}
          implies that $P<V$ and $U<Q$. Thus, $(P,Q)\cap (U,V)$ is non-empty.
  For some $m\in (P,Q)\cap (U,V)$ and  take $h(\xi)=\big(\sigma(\xi)(1-|\xi|^{2})\big)^{m}$ and
 $\frac{1}{p}+\frac{1}{p^{'}}=1$. From Lemma  \ref{lem1}, there is a positive constant $C$ such that $$\sigma^{s}(z)\big(1-|z|^{2}\big)^{(\alpha +2)t-2s}\int_{\mathbb{D}}\dfrac{|K^{\alpha}(z,\xi)|^{t}}{\sigma^{s}(\xi)
 \left(1-|\xi|^{2}\right)^{2(s-1)}}h^{p^{'}}(\xi)
 dA(\xi)\le Ch^{p^{'}}(\xi),\;\;\; z\in \mathbb{D}$$ and
$$\sigma^{-s}(z)\left(1-|z|^{2}\right)^{2(1-s)}\int_{\mathbb{D}}|K^
{\alpha}(z,\xi)|^{t}\sigma^{s}(\xi)\big(1-|\xi|^{2}
\big)^{(\alpha +2)t-2s}h^{p}(\xi)dA(\xi)\le C h^{p}(\xi)$$ $z\in \mathbb{D}$. By Schur's test, boundedness of the operator
$T_{t,s}$ on $L^{p}(\mathbb{D})$ holds.
\end{proof}

\begin{lem}\label{lem6}Let $\{z_{k}\}_{k=1}^{\infty}$ be an $r$- lattice. For $1<p< \infty$ and $\{\lambda_{k}\}_{k=1}^{\infty}\in l^p$, let \begin{equation}\label{aa1} f(z)= \sum_{k=1}^{\infty}\lambda_{k}\dfrac{K^{\alpha}({z_k},z)}{\sigma^
{\frac{1}{p}}(z_k)(1-|z_k|^{2})^{\frac{2}{p}-(\alpha +2)p}}\end{equation} where $\alpha>-1$. Then $f\in A_{\sigma}^{p}(\mathbb{D})$ and $\|f\|_{p,\sigma}\le C \|\{\lambda_k\}_{k=1}^{\infty}\|_{l^p}$.
\end{lem}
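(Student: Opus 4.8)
The plan is to argue by duality, using the boundedness of the integral operator from Lemma \ref{lem5} as the main engine. Since $1<p<\infty$ and $A_\sigma^p$ is a closed subspace of $L^p(\sigma\,dA)$, we have
\[
\|f\|_{A_\sigma^p}=\sup\Big\{\Big|\int_{\mathbb{D}}f(z)\overline{g(z)}\sigma(z)\,dA(z)\Big|: g\in L^{p'}(\sigma\,dA),\ \|g\|_{L^{p'}(\sigma\,dA)}\le 1\Big\},
\]
where $\tfrac1p+\tfrac1{p'}=1$. Substituting the series (\ref{aa1}) and interchanging summation and integration (justified a posteriori by the absolute convergence that the estimate itself yields), the pairing becomes $\sum_k \lambda_k N_k\,I_k(g)$, where $N_k$ denotes the normalising factor in (\ref{aa1}) and $I_k(g)=\int_{\mathbb{D}}K^\alpha(z_k,z)\overline{g(z)}\sigma(z)\,dA(z)$. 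First I would apply Hölder's inequality in the index $k$ to extract the $\ell^p$ norm,
\[
\Big|\sum_k \lambda_k N_k I_k(g)\Big|\le \|\{\lambda_k\}\|_{\ell^p}\Big(\sum_k N_k^{p'}|I_k(g)|^{p'}\Big)^{1/p'},
\]
so that it suffices to prove the dual-side estimate $\sum_k N_k^{p'}|I_k(g)|^{p'}\lesssim \|g\|_{L^{p'}(\sigma\,dA)}^{p'}$; this bounds the supremum by $C\|\{\lambda_k\}\|_{\ell^p}$ and simultaneously shows $f\in A_\sigma^p$.

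The core of the argument is this dual-side estimate. Bounding crudely, $|I_k(g)|\le G(z_k)$ with $G(w):=\int_{\mathbb{D}}|K^\alpha(w,z)|\,|g(z)|\sigma(z)\,dA(z)$. Because $|K^\alpha(w,z)|$, $\sigma(w)$ and $1-|w|^2$ are each comparable to their values at $z_k$ uniformly for $w\in E(z_k,r)$ (the pseudohyperbolic comparabilities recorded in the introduction), the quantity $N_k^{p'}G(z_k)^{p'}$ is comparable to its average over $E(z_k,r)$. Using $|E(z_k,r)|\asymp(1-|z_k|^2)^2$ together with the finite-overlap property (\ref{a1}) of the lattice, I would dominate $\sum_k N_k^{p'}G(z_k)^{p'}$ by a constant multiple of an integral $\int_{\mathbb{D}}\Phi(w)\,dA(w)$, where the weights $N_k^{p'}(1-|z_k|^2)^{-2}$ are absorbed into powers of $\sigma(w)$ and $1-|w|^2$. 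Writing $F:=|g|\sigma^{1/p'}$, so that $\|F\|_{L^{p'}(dA)}=\|g\|_{L^{p'}(\sigma\,dA)}$, this integral is precisely $\|T_{t,s}F\|_{L^{p'}(dA)}^{p'}$ for the operator of Lemma \ref{lem5} with the parameters $t$ and $s$ forced by the normalising exponent in (\ref{aa1}). Lemma \ref{lem5} then yields $\int_{\mathbb{D}}\Phi\,dA\lesssim \|F\|_{L^{p'}(dA)}^{p'}=\|g\|_{L^{p'}(\sigma\,dA)}^{p'}$, which closes the estimate.

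The main obstacle is twofold, and both difficulties live in this last passage. First, one must verify that the exponent $s$ produced by matching the weights of $T_{t,s}$ against the normalising factor $N_k$ falls strictly inside the admissible window $\tfrac{1-p'}{p'}<s<(\alpha+2)\tfrac{t}{2}+\tfrac1{p'}$ demanded by Lemma \ref{lem5} (with $p$ replaced by $p'$); this is a bookkeeping computation in the admissible-weight exponents, and it is exactly where the precise power of $(1-|z_k|^2)$ in (\ref{aa1}) is consumed, so any slack there must be reconciled to keep $s$ off the endpoints. Second, the discrete-to-continuous reduction rests on the sub-mean-value comparison $N_k^{p'}G(z_k)^{p'}\asymp |E(z_k,r)|^{-1}\int_{E(z_k,r)}(\cdots)\,dA$, which I expect to follow from the pseudohyperbolic comparabilities, but it must be arranged so that (\ref{a1}) produces a single constant independent of the lattice. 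Once these two points are secured, the remaining steps are routine.
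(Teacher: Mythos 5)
The paper does not actually supply a proof of this lemma --- it only remarks that the proof is "an easy modification of arguments in Theorem 4.1 in \cite{SAU}" --- so there is nothing to compare your argument against line by line, and it must be judged on its own. Your architecture (duality against $L^{p'}(\sigma\,dA)$, H\"older in $k$, comparability of all quantities on $E(z_k,r)$, the finite-overlap property, and finally a Schur-type bound for the resulting integral operator) is the standard route for such atomic-decomposition estimates and is sound in outline; the comparabilities $|1-\bar{\xi}w|\asymp|1-\bar{\xi}z_k|$ and $\sigma(w)\asymp\sigma(z_k)$ for $w\in E(z_k,r)$ that you invoke are correct, and the a posteriori justification of interchanging sum and integral can be made honest by working with finite partial sums.

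The gap sits exactly where you flag the delicacy, but it is worse than a bookkeeping issue: the integral operator you arrive at is \emph{not} of the form $T_{t,s}$ for any choice of $(t,s)$, so Lemma \ref{lem5} cannot be cited. Concretely, with $N_k=\sigma^{-1/p}(z_k)(1-|z_k|^2)^{(\alpha+2)-2/p}$ (the normalisation that makes each atom have unit norm by Lemma \ref{lem1}; the exponent $(\alpha+2)p$ printed in (\ref{aa1}) appears to be a typo, and the discrepancy does not affect what follows), your discretisation of $\sum_k N_k^{p'}G(z_k)^{p'}$ produces
\[
\int_{\mathbb{D}}\Big[\sigma^{-1/p}(w)\,(1-|w|^2)^{\alpha}\int_{\mathbb{D}}\frac{\sigma^{1/p}(\xi)\,F(\xi)}{|1-\bar{\xi}w|^{\alpha+2}}\,dA(\xi)\Big]^{p'}dA(w),
\qquad F=|g|\,\sigma^{1/p'} .
\]
To write the inner operator as $T_{t,s}$ one must take $t=1$ to match the kernel power $\alpha+2$; then matching the $\sigma$-exponents ($-1/p$ outside, $+1/p$ inside) forces $s=-1/p$, while matching the $(1-|\xi|^2)$-exponent $0$ inside (equivalently the exponent $\alpha$ outside, since $(\alpha+2)\cdot 1-2s=\alpha$ requires $s=1$) forces $s=1$. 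These are incompatible: the family $T_{t,s}$ ties the $\sigma$-weight and the $(1-|\cdot|^2)$-weight together through the single parameter $s$, and the operator needed here breaks that tie. The repair is to run a Schur test directly on this operator with test functions of the form $(\sigma(\xi))^{m_1}(1-|\xi|^2)^{m_2}$, mimicking the proof of Lemma \ref{lem5} rather than quoting it; when one does, the test closes only under additional restrictions relating $\alpha$, $p$ and the admissibility index $\delta$ of $\sigma$ (already for $\sigma\equiv 1$ the power test functions fail when $\alpha$ is close to $-1$), restrictions that neither your sketch nor the statement of the lemma makes explicit. So the central analytic step of your proposal is missing, even though the surrounding scaffolding is correct.
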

The proof is an easy modification of arguments in Theorem 4.1 in \cite{SAU}. We omit the details.

\begin{lem} \label{lem7}Suppose $\Omega\ge 0$, $1\le p\le \infty$, $t>0$ and $s\in \mathbb{R}$ satisfies $t<s+\frac{1}{p}<1$. Then the following are equivalent
\begin{enumerate}\item[{(a)}] $\widetilde{M}_{t,s}(z)=\dfrac{\widetilde{\Omega}_{t}(z)}{\sigma^{s}(z)(1-|z|^{2})^{2s-(\frac{\alpha+2}{2})t}}\in L^{p}(\mathbb{D})$.
\item[{(b)}] $\widehat{M}_{R,s}(z)=\dfrac{\widehat{\Omega}_{R}
    (z)}{\sigma^{s}(z)(1-|z|^{2})^{2(s-1)}}\in L^{p}(\mathbb{D})$ for some (or any) $R$, $0<R<1$.
\item[{(c)}]The sequence $\bigg\{\dfrac{\widehat{\Omega}_{r}(z_{k})}
    {\sigma^{s}(z_{k})(1-|z_{k}|^{2})^{2(s-1-\frac{1}{p})}}\bigg\}_{k=1}^{\infty}$ belongs to $l^{p}$ for some (or any) $r$-lattice $\{z_{k}\}_{k=1}^{\infty}$ with $0<r<1$.
\end{enumerate}
 Moreover, we have $$\|\widetilde{M}_{t,s}\|_{p}\asymp \|\widehat{M}_{R,s}\|_{p}\asymp \bigg\|\bigg\{\dfrac{\widehat{\Omega}_{r}
 (z_{k})}{\sigma^{s}(z_{k})(1-|z_{k}|^{2})^
 {2(s-1-\frac{1}{p})}}\bigg\}_{k=1}^{\infty}\bigg\|_{l^{p}}.$$
\end{lem}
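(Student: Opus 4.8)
The plan is to establish the three-way equivalence $(a)\Leftrightarrow(b)\Leftrightarrow(c)$ by proving a cycle of norm-comparisons, since the quantitative estimate $\|\widetilde{M}_{t,s}\|_p \asymp \|\widehat{M}_{R,s}\|_p \asymp \|\{\cdots\}\|_{l^p}$ immediately yields all the equivalences. First I would treat the implication $(b)\Rightarrow(a)$, or rather the inequality $\|\widetilde{M}_{t,s}\|_p \lesssim \|\widehat{M}_{R,s}\|_p$. The natural route is to write the $t$-Berezin transform $\widetilde{\Omega}_t(z)=\int_{\mathbb D}|k_z^\alpha(w)|^t\,d\Omega(w)$ and, using the normalization $k_z^\alpha = K^\alpha(z,\cdot)/\sqrt{K^\alpha(z,z)}$ together with $K^\alpha(z,z)=(1-|z|^2)^{-(\alpha+2)}$, recognize that $|k_z^\alpha(w)|^t$ is a nonnegative subharmonic function of $w$. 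I can then invoke Lemma \ref{lem3} to dominate the integral against $d\Omega$ by the integral against $\widehat{\Omega}_r\,dA$. After this substitution, $\widetilde{M}_{t,s}(z)$ gets bounded by an expression of the form $\int_{\mathbb D}\frac{|K^\alpha(z,\xi)|^t}{\sigma^s(\xi)(1-|\xi|^2)^{2(s-1)}}\,\widehat{\Omega}_r(\xi)\,dA(\xi)$ multiplied by the appropriate power weights in $z$. This is precisely the integral operator $T_{t,s}$ from Lemma \ref{lem5} applied to $\widehat{M}_{r,s}$, so $L^p$-boundedness of $T_{t,s}$ delivers the bound $\|\widetilde{M}_{t,s}\|_p \lesssim \|\widehat{M}_{r,s}\|_p$. (I would verify the hypothesis $\frac{1-p}{p}<s<(\alpha+2)\frac{t}{2}+\frac1p$ of Lemma \ref{lem5} is consistent with the standing assumption $t<s+\frac1p<1$; this compatibility check is where I must be careful with the exponent bookkeeping.)

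For the reverse direction, $\|\widehat{M}_{R,s}\|_p \lesssim \|\widetilde{M}_{t,s}\|_p$, the strategy is a pointwise estimate: on the disk $E(z,R)$ the kernel $|k_z^\alpha(w)|^t$ is bounded below by a constant multiple of $(1-|z|^2)^{-(\alpha+2)t/2}$, using the comparabilities $(1-|z|^2)\asymp(1-|w|^2)\asymp|1-\bar z w|$ valid for $w\in E(z,R)$ that are recorded in the preliminaries. Restricting the Berezin integral to $E(z,R)$ therefore gives $\widetilde{\Omega}_t(z)\gtrsim (1-|z|^2)^{-(\alpha+2)t/2}\,\Omega(E(z,R))$, which after dividing by the weights and recalling $|E(z,R)|\asymp(1-|z|^2)^2$ yields $\widehat{M}_{R,s}(z)\lesssim \widetilde{M}_{t,s}(z)$ pointwise, hence the desired $L^p$-inequality. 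Combining the two directions establishes $(a)\Leftrightarrow(b)$ with comparable norms, and the independence of the radius $R$ in $(b)$ is exactly the content of Lemma \ref{lem4} (with the exponent $2(s-1)$ matching the statement there).

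Finally, for $(b)\Leftrightarrow(c)$ I would discretize using the $r$-lattice $\{z_k\}$. The direction $(c)\Rightarrow(b)$ follows by covering $\mathbb D$ by the disks $E(z_k,R)$ and using the bounded-overlap property \eqref{a1} together with the comparability of $\widehat{\Omega}_R(z)$ to $\widehat{\Omega}_r(z_k)$ when $z\in E(z_k,R)$; summing the local $L^p$-masses against the bounded overlap gives $\|\widehat{M}_{R,s}\|_p^p \lesssim \sum_k \frac{\widehat{\Omega}_r(z_k)^p}{\sigma^{sp}(z_k)(1-|z_k|^2)^{2(s-1)p}}|E(z_k,R)|$, and since $|E(z_k,R)|\asymp(1-|z_k|^2)^2$ the power of $(1-|z_k|^2)$ collapses to $2(s-1)p+2 = 2(s-1-\frac1p)p$, matching the exponent in $(c)$. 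The direction $(b)\Rightarrow(c)$ is the converse discretization: evaluating the $L^p$-norm of $\widehat{M}_{R,s}$ on each $E(z_k,r)$ from below by the value at the lattice point (again via the comparabilities and Lemma \ref{l4}, which shows $\widehat\Omega_r$ behaves subharmonically so that its lattice-point value is controlled by its average) and using that the lattice covers $\mathbb D$. The main obstacle throughout is the exponent arithmetic: keeping the three different weight normalizations in $(a)$, $(b)$, $(c)$ consistent and checking that all the power-weight identifications actually align requires meticulous tracking, and the single genuinely analytic input is the boundedness of $T_{t,s}$ from Lemma \ref{lem5}, whose Schur-test hypotheses must be seen to hold under $t<s+\frac1p<1$.
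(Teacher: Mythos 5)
Your proposal follows essentially the same route as the paper's own proof: the pointwise lower bound for $|k_z^{\alpha}|^t$ on $E(z,R)$ gives $\widehat{M}_{R,s}\lesssim\widetilde{M}_{t,s}$, Lemma \ref{lem3} plus the Schur-test operator of Lemma \ref{lem5} gives the converse (the paper writes this operator as $T_{t,-s}$ rather than $T_{t,s}$, but this is exactly the exponent bookkeeping you flag), and the lattice discretization with bounded overlap and Lemma \ref{lem4} handles $(b)\Leftrightarrow(c)$ and the radius independence. No genuine gaps relative to the paper's argument.
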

\begin{proof} We will prove the result in the order: $(a)\Leftrightarrow(b)$ and $(b)\Leftrightarrow (c).$\\
$(a)\Rightarrow (b) $. For any $R\in(0,1)$, there exist positive constant $C_R$ such that for any $z\in \mathbb{D}$ kernel estimate holds. Thus for $s\in \mathbb{R}$ , we have \begin{align*} \widehat{M}_{R,s}(z) &=\dfrac{\Omega(E(z,R))}{\sigma^{s}(z)(1-|z|^{2})^{2s}}\\& \le C_{R} \dfrac{1}{\sigma^{s}(z)(1-|z|^{2})^{2s-(\frac{\alpha +2}{2})t}}
\int_{E(z,R)}
|k_{z}^{\alpha}(\xi)|^{t}d\Omega(\xi) \\ &\le C_{R} \dfrac{1}{\sigma^{s}(z)(1-|z|^{2})^{2s-(\frac{\alpha +2}{2})t}}
\int_{\mathbb{D}}
|k_{z}^{\alpha}(\xi)|^{t}d\Omega(\xi) \\ & \le C_{R}\widetilde{M}_{t,s}(z)
\end{align*}
Above implies that, $\|\widehat{M}_{R,s}\|_{p}\le C\|\widetilde{M}_{t,s}\|_{p}$.\\

$(b)\Rightarrow (a)$. By Lemma  \ref{lem2}  and Lemma \ref{lem4}, there is a positive constant $C$ such that for any $z\in \mathbb{D}$ and $s\in \mathbb{R}$, we have \begin{align*}\widetilde{M}_{t,s}(z)&=\dfrac{\widetilde{\Omega}_{t}(z)}{\sigma
^{s}(z)(1-|z|^{2})^{2s-(\frac{\alpha+2}{2})t}}\\&=\dfrac{1}{\sigma^{s}(z)
(1-|z|^{2})^{2s-(\frac{\alpha +2}{2})t}}\int_{\mathbb{D}}
|k_{z}^{\alpha}(\xi)|^{t}d\Omega(\xi)\\&\le \dfrac{C}{\sigma^{s}(z)(1-|z|^{2})^{2s-(\frac{\alpha +2}{2})t}}\int_{\mathbb{D}}
|k_{z}^{\alpha}(\xi)|^{t}\widehat{\Omega}_{R}
(\xi)dA(\xi)\\&\le \dfrac{C}{\sigma^{s}(z)(1-|z|^{2})^{2s-(\alpha +2)t}}\int_{\mathbb{D}}
|K^{\alpha}(z,\xi)|^{t}\sigma^s(\xi)(1-|\xi|^{2})^{2(s-1)}\widehat{M}_{R,s}
(\xi)dA(\xi)\\&\le CT_{t,-s}(\widehat{M}_{R,s})(z)
\end{align*}
Since $t<s+\frac{1}{p}<1$ and Lemma \ref{lem5}  implies that $$\|\widetilde{M}_{t,s}\|_{p}\le \|T_{t,-s}(\widehat{M}_{R,s})\|_{p}\le C\|\widehat{M}_{R,s}\|_{p}.$$

$(b)\Rightarrow (c)$. Assume that $\widehat{M}_{R,s}\in L^{p}(\mathbb{D})$ for some $R$, $0<R<1$. Let $\{z_{k}\}_{k=1}^{\infty}$ be any $r$-lattice. By Lemma \ref{lem4}, we may assume $R<r$. By triangle inequality, we have $E(z_{k},r)\subset E(z,2r)$ for $z\in E(z_{k},r)$ and for all $k$. Thus, we have\begin{align}\label{aa2}\dfrac{\widetilde{\Omega}_{r}(z_{k})}{\sigma
^{s}(z_{k})(1-|z_{k}|^{2})^{2(s-1)}}& \le C\dfrac{\Omega(E(z,2r))}{\sigma^{s}(z)(1-|z|^{2})^{2s}}\notag\\& \asymp \dfrac{\widehat{\Omega}_{2r}(z)}{\sigma^{s}(z)(1-|z|^{2})^{2(s-1)}}\end{align}
whenever $z\in E(z_{k}, r)$. Therefore, we have
\begin{align*}\sum_{k=1}^{\infty}\dfrac{\widehat{\Omega}_{r}^{p}(z_{k})}
{\sigma^{sp}(z_{k})(1-|z_{k}|^{2})^{2(s-1)p-2}}&\le C\sum_{k=1}^{\infty}\int_{E(z_{k},r)}\dfrac{\widehat{\Omega}_{2r}^{p}(z)}{
\sigma^{sp}(z)(1-|z|^{2})^{2(s-1)p}}dA(z)\\ &\le CM\int_\mathbb{D}\dfrac{\widehat{\Omega}_{2r}^{p}(z)}{\sigma^{sp}(z)(1-|z|^{2})
^{2(s-1)p}}dA(z)
\end{align*}
Thus by Lemma  \ref{lem4}, we have $$\bigg\|\bigg\{\dfrac{\widehat{\Omega}_{r}(z_{k})}
{\sigma^{s}(z_{k})(1-|z_{k}|^{2})
^{2(s-1-\frac{1}{p})}}\bigg\}_{k}\bigg\|_{l^{p}}\le C\|\widehat{M}_{2r,s}\|_{p}\asymp\|\widehat{M}_{R,s}\|_{p}$$.

$(c) \Rightarrow (b)$. Finally, Suppose $\bigg\{\dfrac{\widehat{\Omega}_{r}(z_{k})}{\sigma^{s}(z_{k})(1-|z_{k}|^{2})^
{2(s-1-\frac{1}{p})}}\bigg\}_{k}\in {l^{p}}$ for some $r$-lattice $\{z_{k}\}_{k=1}^{\infty}$. Similar to (\ref{aa2}) $\widehat{\Omega}_{r}(z)\le C\widehat{\Omega}_{2r}(z_{k})$ for $z\in
E(z_{k},r)$. Therefore, we have
\begin{align*} \int_{\mathbb{D}}\bigg(\dfrac{\widehat{\Omega}_{r}
(\xi)}{\sigma^{s}(\xi)(1-|\xi|^{2})
^{2(s-1)}}\bigg)^{p}dA(\xi)
&\le \sum_{k=1}^{\infty}\int_{E(z_{k},r)}\dfrac{\widehat{\Omega}^{p}_{r}(\xi)}{
\sigma^{sp}(\xi)(1-|\xi|^{2})^{2(s-1)p}}dA(\xi)\\
&\le C\sum_{k=1}^{\infty}\dfrac{\widehat{\Omega}^{p}_{2r}(z_k)}{\sigma
^{sp}(z_k)(1-|z_k|^{2})^{2(s-1)p-2}}\\
&\le C\sum_{k=1}^{\infty}\dfrac{\widehat{\Omega}^{p}_{r}(z_k)}{\sigma
^{sp}(z_k)(1-|z_k|^{2})^{2(s-1)p-2}}
\end{align*}
 Above inequality and Lemma \ref{lem4}  implies that $$\|\widehat{M}_{R,s}\|_{p}\asymp\|\widehat{M}_{r,s}\|_{p}\le C\bigg\|\bigg\{\dfrac{\widehat{\Omega}_{r}(z_{k})}{\sigma^{s}(z_k)
(1-|z_{k}|^{2})^{2(s-1-\frac{1}{p})}}\bigg\}_{k}\bigg\|_{l^{p}}$$ for any $R\in (0,1)$.
\end{proof}
\section{Carleson measure characterizations}
\noindent In this section, we are using \textit{averaging function} and \textit{t-Berezin transform} as our main tools to characterize the $(p,q,\sigma)-$Bergman Carleson measure for $0<p,q<\infty$ and $t>0$. Let $\Omega $ be a finite positive Borel measure. Recall that,
\begin{itemize}
   \item $\Omega$ is a \textit{$(p,q,\sigma)-$Bergman Carleson measure} if the embedding $i:A_{\sigma}^{p}\longrightarrow L^{q}(\Omega)$ is bounded. In other words, we can say $\Omega$ is a $(p,q,\sigma)-$Bergman Carleson measure if there exists a finite constant $C>0$ such that $$\int_{\mathbb{D}}|f|^{q}d\Omega\le C\|f\|_{A_{\sigma}^{p}}^{q}$$ for all $f\in A_{\sigma}^{p}$.
   \item $\Omega$ is a \textit{vanishing $(p,q,\sigma)-$Bergman Carleson measure} if $\int_{\mathbb{D}}|f_{n}|^{q}d\Omega \longrightarrow0 $ as $n\longrightarrow \infty$ whenever $\{f_{n}\}$ is a bounded sequence in $A_{\sigma}^{p}$ which converges to $0$ uniformly on any compact subset of $\mathbb{D}$.
  \end{itemize}
Note that, by taking $p=q$ and $\sigma(z) =1$, $\Omega$ becomes a Bergman Carleson measure and vanishing Carleson measure. We divide our result into two cases: $0<p\le q<{\infty}$ and $0<q<p<{\infty}$.

\begin{thm} \label{1t} Let $\Omega$ be a finite positive Borel measure and $0<p\le q<{\infty}$. Then the following statements are equivalent:
\begin{enumerate}
  \item[{(a)}] $\Omega$ is a $(p,q,\sigma)-$Bergman Carleson measure.
  \item[{(b)}] The function $\dfrac{\widetilde{\Omega}_{t}(z)}{\sigma^{\frac{q}{p}}(z)(1-|z|^
      {2})^{2\frac{q}{p}-(\frac{\alpha +2}
{2})t}}$ is bounded on $\mathbb{D}$ for $t>\frac{2q}{p(\alpha+2)}$.
  \item[{(c)}] The function $\dfrac{\widehat{\Omega}_{R}(z)}{\sigma^{\frac{q}{p}}(z)(1-|z|^{2})^
{2(\frac{q-p}{p})}}$ is bounded on $\mathbb{D}$ for some (or any) $R\in (0,1)$.
  \item[{(d)}]The sequence $\bigg\{\dfrac{\widehat{\Omega}_{r}(a_{k})}{\sigma^{\frac{q}{p}
      }(z_k)(1-|z_k|^{2})^{2(\frac{q-p}{p})}}\bigg\}_{k=1}^{\infty}$ is bounded for some (or any) $r$-lattice $\{z_k\}_{k=1}^{\infty}$ with $0<r<1$.
  \end{enumerate}
Furthermore, \begin{align}\|i\|_{A_{\sigma}^{p}\rightarrow L^{p}(\Omega)}&\asymp \sup_{z\in \mathbb{D}}\dfrac{\widetilde{\Omega}_{t}(z)}{\sigma^{\frac{q}{p}}(z)(1-|z|^
      {2})^{2\frac{q}{p}-(\frac{\alpha +2}
{2})t}}\notag\\ &\asymp \sup_{z\in\mathbb{D}}\dfrac{\widehat{\Omega}_{R}(z)}{\sigma^{\frac{q}{p}}(z)
(1-|z|^{2})^{2(\frac{q-p}{p})}}\notag\\
&\asymp \sup_{k}\dfrac{\widehat{\Omega}_{r}(z_{k})}{\sigma^{\frac{q}{p}}(z_k)
(1-|z_k|^{2})
^{2(\frac{q-p}{p})}}\label{aas1}
\end{align}
\end{thm}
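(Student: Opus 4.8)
The plan is to establish the four conditions equivalent by running the cycle $(a)\Rightarrow(b)\Rightarrow(c)\Rightarrow(d)\Rightarrow(a)$, keeping an explicit constant attached to every arrow so that, on closing the loop, the comparisons collapse to the three asymptotic identities in \eqref{aas1}. Writing $s=q/p$, conditions $(b)$, $(c)$, $(d)$ are three descriptions (Berezin, averaging, discrete) of one geometric quantity; structurally this parallels Lemma \ref{lem7}, but now at the supremum (Carleson) end and in the complementary parameter range $t>\frac{2q}{p(\alpha+2)}$, where this hypothesis plays exactly the role that $t<s+\frac1p<1$ plays there, namely it keeps the relevant kernel integrals convergent.

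For $(a)\Rightarrow(b)$ I would test the embedding $i\colon A_\sigma^p\to L^q(\Omega)$ on the analytic family $F_z(w)=(1-|z|^2)^{(\alpha+2)t/(2q)}(1-\bar z w)^{-(\alpha+2)t/q}$, designed so that $|F_z(w)|^q=|k_z^\alpha(w)|^t$ and hence $\int_{\mathbb D}|F_z|^q\,d\Omega=\widetilde\Omega_t(z)$. An estimate of the type underlying Lemma \ref{lem1}(2), legitimate precisely because $t>\frac{2q}{p(\alpha+2)}$ pushes the kernel exponent past the integrability threshold, yields $\|F_z\|_{A_\sigma^p}^q\asymp\sigma^{q/p}(z)(1-|z|^2)^{2q/p-(\frac{\alpha+2}{2})t}$. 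Feeding this into $\widetilde\Omega_t(z)=\int_{\mathbb D}|F_z|^q\,d\Omega\le\|i\|^q\,\|F_z\|_{A_\sigma^p}^q$ and dividing out the normalization gives $\sup_z\frac{\widetilde\Omega_t(z)}{\sigma^{q/p}(z)(1-|z|^2)^{2q/p-(\frac{\alpha+2}{2})t}}\lesssim\|i\|^q$, which is $(b)$. For $(b)\Rightarrow(c)$ I would argue pointwise as in Lemma \ref{lem7}$(a)\Rightarrow(b)$: since $|k_z^\alpha(w)|^t\asymp(1-|z|^2)^{-(\frac{\alpha+2}{2})t}$ on $E(z,R)$, one has $\Omega(E(z,R))\lesssim(1-|z|^2)^{(\frac{\alpha+2}{2})t}\widetilde\Omega_t(z)$, and after dividing by $|E(z,R)|\asymp(1-|z|^2)^2$ the exponents match, so the normalized averaging quantity is dominated by the normalized Berezin quantity with the same constant.

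The step $(c)\Rightarrow(d)$ is immediate by restricting the bounded function in $(c)$ to the lattice points (taking the radius in $(d)$ equal to the one in $(c)$); the radius independence behind the ``some (or any)'' clauses I would obtain from Lemma \ref{l4}, dividing $\Omega(E(a,R))\lesssim|E(a,R)|^{-1}\int_{E(a,R)}\Omega(E(z,r))\,dA(z)$ by $|E(a,R)|$ and using $\sigma(z)\asymp\sigma(a)$, $1-|z|^2\asymp1-|a|^2$ on $E(a,R)$. For $(d)\Rightarrow(a)$ I would cover $\mathbb D$ by $\{E(z_k,r)\}$, use subharmonicity of $|f|^q$ to get $|f(w)|^q\lesssim\sigma(z_k)^{-1}(1-|z_k|^2)^{-2}\int_{E(z_k,2r)}|f|^q\sigma\,dA$ for $w\in E(z_k,r)$, bound $\Omega(E(z_k,r))$ via $(d)$, and invoke the finite-overlap property \eqref{a1} together with the comparability of $\sigma$ and $1-|\cdot|^2$ on each disk to reach $\int_{\mathbb D}|f|^q\,d\Omega\lesssim\int_{\mathbb D}|f|^q\sigma^{q/p}(1-|w|^2)^{2(q-p)/p}\,dA$. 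When $p=q$ this is already $\|f\|_{A_\sigma^p}^p$; when $p<q$ I would split $|f|^q=|f|^{q-p}|f|^p$ and insert the growth bound $|f|^{q-p}\lesssim\|f\|_{A_\sigma^p}^{q-p}\sigma^{-(q-p)/p}(1-|w|^2)^{-2(q-p)/p}$ from Lemma \ref{lem1}(1), whereupon the $\sigma$- and $(1-|\cdot|^2)$-powers cancel and the right side collapses to $\|f\|_{A_\sigma^p}^{q-p}\int_{\mathbb D}|f|^p\sigma\,dA=\|f\|_{A_\sigma^p}^q$. Chaining the four quantitative arrows around the loop then produces all three relations in \eqref{aas1}.

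I expect the genuine obstacle to sit in $(a)\Rightarrow(b)$: the norm asymptotic $\|F_z\|_{A_\sigma^p}^q\asymp\sigma^{q/p}(z)(1-|z|^2)^{2q/p-(\frac{\alpha+2}{2})t}$ must hold for a general admissible $\sigma$, which is not a pure power weight, so the classical kernel-integral estimate cannot be quoted verbatim and has to be pushed through the admissibility conditions (monotonicity of $\sigma$ and of $\sigma(r)(1-r)^{-(1+\delta)}$) exactly as in the proof of Lemma \ref{lem1}(2); this is also the only place where $t>\frac{2q}{p(\alpha+2)}$ is indispensable. A secondary technical point is the $p<q$ case of $(d)\Rightarrow(a)$, whose success hinges on the exact cancellation of the $\sigma$- and $(1-|\cdot|^2)$-exponents after inserting the growth estimate, so that bookkeeping must be carried out precisely rather than up to unspecified powers.
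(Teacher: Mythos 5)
Your proposal is correct, and its skeleton --- kernel test functions for the necessity directions, a lattice decomposition with the finite-overlap property \eqref{a1} for the sufficiency, and the pointwise domination of the averaging function by the $t$-Berezin transform for $(b)\Rightarrow(c)$ --- coincides with the paper's. Two steps differ in substance. First, for $(a)\Rightarrow(b)$ you test the embedding on the fractional-power kernel $F_z(w)=(1-|z|^2)^{(\alpha+2)t/(2q)}(1-\bar zw)^{-(\alpha+2)t/q}$, engineered so that $|F_z|^q=|k_z^\alpha|^t$ and $\int_{\mathbb D}|F_z|^q\,d\Omega=\widetilde\Omega_t(z)$; the paper instead keeps the test function equal to $K^\alpha(z,\cdot)$ and exploits the fact, available once $(a)\Leftrightarrow(c)\Leftrightarrow(d)$ is established (conditions $(c)$ and $(d)$ depend only on the ratio $q/p$), that $\Omega$ is then a $(p_1,q_1,\sigma)$-Carleson measure for every pair with $q_1/p_1=q/p$, choosing $q_1$ proportional to $t$. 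The two devices circumvent the same obstruction: yours requires the norm asymptotic of Lemma \ref{lem1}(2) for non-integer kernel powers $(1-\bar zw)^{-\beta}$ with $\beta p>2$, which, as you correctly flag, must be re-derived from the admissibility conditions rather than quoted; the paper's requires the ratio-invariance, which it gets for free from the implications already proved, at the price of a more roundabout write-up. You also correctly identify $t>\frac{2q}{p(\alpha+2)}$ as exactly the threshold $\beta p>2$. Second, in $(d)\Rightarrow(a)$ for $p<q$ you absorb the excess factor $|f|^{q-p}$ via the pointwise growth estimate of Lemma \ref{lem1}(1), relying on the exact cancellation of the $\sigma$- and $(1-|\cdot|^2)$-exponents, whereas the paper raises the local integrals to the power $q/p\ge 1$ and uses the elementary inequality $\sum_k a_k^{q/p}\le\bigl(\sum_k a_k\bigr)^{q/p}$ before invoking \eqref{a1}; both yield $\int_{\mathbb D}|f|^q\,d\Omega\lesssim \sup_k\bigl(\widehat\Omega_r(z_k)\sigma^{-q/p}(z_k)(1-|z_k|^2)^{-2(q-p)/p}\bigr)\|f\|_{A_\sigma^p}^q$, which is the quantitative arrow needed to close the loop for \eqref{aas1}. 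The remaining arrows (pointwise estimate for $(b)\Rightarrow(c)$, restriction to lattice points for $(c)\Rightarrow(d)$, radius independence via Lemma \ref{l4}) match the paper.
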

\begin{proof}$(d)\Rightarrow (a)$  We assume that $\{z_{k}\}_{k=1}^{\infty}$ be an $r$-lattice. We use the elementary inequality $\sum_{k=1}^{\infty}u_{k}^{l}\le (\sum_{k=1}^{\infty}u_k)^l$,\;\;\;$u_k\ge 0$, $k=1,2,\cdots$, by taking $l=\frac{p}{q}\ge1$, using Lemma  \ref{lem1}  and (\ref{a1}), we obtain
\begin{align} \int_{\mathbb{D}}|f(z)|^{q}d{\Omega} &= \sum_{k=1}^{\infty}\int_{E(z_k,r)}|f(z)|^{q}d{\Omega}\notag\\ &\le \sum_{k=1}^{\infty}\widehat{\Omega}_{r}(z_k)|E(z_k,r)|\bigg(\sup_{z\in E(z_k,r)}|f(z)|^{p}\bigg)^{\frac{q}{p}}\notag\\ &\le C\sum_{k=1}^{\infty}\dfrac{\widehat{\Omega}_{r}(z_{k})}{\sigma^{\frac
{q}{p}
}(z_k)(1-|z_k|^{2})^{2(\frac{q-p}{p})}}\bigg(\int_{E(z_k,\frac{1+r}{2})}|f(\xi)|
^{p}
\sigma(\xi)dA(\xi)\bigg)^\frac{q}{p}\notag\\
&\le C\sup_{k}\dfrac{\widehat{\Omega}_{r}(z_{k})}{\sigma^{\frac{q}{p}}(z_k)
(1-|z_k|^
{2})^{2(\frac{q-p}{p})}}\bigg(\sum_{k=1}^{\infty}\int_{E(z_k,\frac{1+r}
{2})}
|f(\xi)|^{p}\sigma(\xi)dA(\xi)\bigg)^\frac{q}{p}\notag\\
&\le CM^\frac{q}{p}\sup_{k}\dfrac{\widehat{\Omega}_{r}(z_{k})}{\sigma^
{\frac{q}
{p}}(z_k)(1-|z_k|^{2})^{2(\frac{q-p}{p})}}\|f\|_{p,\sigma}^{q}\label{aas2}
\end{align}
Above inequality reveals that \begin{equation*}\|i\|_{A_{\sigma}^{p}\rightarrow L^{q}(\Omega)}\le C\sup_{k}\dfrac{\widehat{\Omega}_{r}(z_{k})}{\sigma^{\frac{q}{p}}(z_k)
(1-|z_k|^{2})
^{2(\frac{q-p}{p})}}\end{equation*}.

$(a)\Rightarrow (c)$. Set $f_{z}(w)=K^{\alpha}(z,w),\;w\in \mathbb{D}$. By Lemma  \ref{lem1}  and statement$(a)$, we have
 \begin{align}\dfrac{\widehat{\Omega}_{R}(z)}{\sigma^{\frac{q}{p}}(z)
(1-|z|^{2})^{2(\frac{q-p}{p})}}
&=\dfrac{\Omega(E(z,R))}{\sigma^{\frac{q}{p}}(z)(1-|z|^{2})^{2\frac{q}{p}}}\notag\\
&\le C \dfrac{(1-|z|^{2})^{q(\alpha +2-\frac{2}{p})}}{\sigma^{\frac{q}{p}}(z)}
\int_{E(z,R)}|f_{z}(\xi)|^{q}d\Omega(\xi) \notag\\
&\le C \dfrac{(1-|z|^{2})^{q(\alpha +2-\frac{2}{p})}}{\sigma^{\frac{q}{p}}(z)}
\int_{\mathbb{D}}|f_{z}(\xi)|^{q}d\Omega(\xi)\label{aas3}\\
&\le C \dfrac{(1-|z|^{2})^{q(\alpha +2-\frac{2}{p})}}{\sigma^{\frac{q}{p}}(z)}\|i\|_{A_{\sigma}^{p}\rightarrow L^{q}(\Omega)}^{q}\|f_z\|_{A_{\sigma}^{p}}^{q}\notag\\
&\le C \|i\|_{A_{\sigma}^{p}\rightarrow L^{q}(\Omega)}^{q}\notag
 \end{align}
Above inequality reveals that \begin{equation}\label{aas4}\sup_{z\in \mathbb{D}}\dfrac{\widehat{\Omega}_{R}(z)}
{\sigma^{\frac{q}{p}}(z)(1-|z|
^{2})^{2(\frac{q-p}{p})}}\le \|i\|_{A_{\sigma}^{p}\rightarrow L^{q}(\Omega)}.\end{equation}
The equivalence of $(a)$,$(c)$ and $(d)$ follows from above proof of implications. Moreover,  \begin{align}\label{aas5}\|i\|_{A_{\sigma}^{p}\rightarrow L^{p}(\Omega)}&\asymp \sup_{z\in\mathbb{D}}\dfrac{\widehat{\Omega}_{R}(z)}{\sigma^{\frac{q}
{p}}(z)(1-|z|^{2})^{2(\frac{q-p}{p})}}\notag\\
&\asymp \sup_{k}\dfrac{\widehat{\Omega}_{r}(z_{k})}{\sigma^{\frac{q}{p}}(z_k)
(1-|z_k|^{2})^{2(\frac{q-p}{p})}}
\end{align}

$(b)\Rightarrow (c)$. For any $R$, $0<R<1$, Lemma \ref{lem7} yields \begin{equation}\label{aas6}\dfrac{\widehat{\Omega}_{R}(z)}{\sigma
^{\frac{q}{p}}(z)(1-|z|^{2})^{2(\frac{q-p}{p})}}\le C\dfrac{\widetilde{\Omega}_{t}(z)}{\sigma^{\frac{q}{p}}(z)(1-|z|^{2})
^{2\frac{q}{p}-(\frac{\alpha +2}{2})t}}\end{equation}.

$(a)\Rightarrow (b)$. The estimate \eqref{aas5} reveals that the embedding operator $i:A_{\sigma}^{p}\longrightarrow L^{q}(\Omega)$ is bounded for some $0<p\leq q<{\infty}$ if and only if $i:A_{\sigma}^{p_1}\longrightarrow L^{q_1}(\Omega)$ is bounded for some $0<p_1\leq q_1<{\infty}$ with $\frac{q_1}{p_1}=\frac{q}{p}$. Since $\Omega$ is a $(p,q,\sigma)$-carleson measure $i:A_{\sigma}^{Np}\longrightarrow L^{Nq}(\Omega)$ where $N$ is some integer with $Np>\frac{4}{(\alpha+2)}$. Let $f_z(.)=K_z^{\alpha}(.)$,\;\;$z\in\mathbb{D}$ and \eqref{aas5} tells us that \begin{align}\dfrac{\widetilde{\Omega}_{Nq}(z)}{\sigma^{\frac{q}{p}}(z)
(1-|z|^{2})^{2\frac{q}{p}-(\frac{\alpha +2}
{2})Nq}} &\asymp \dfrac{1}{\sigma^{\frac{q}{p}}(z)(1-|z|^{2})^{2\frac{q}{p}-(\frac{\alpha +2}
{2})Nq}}\int_{\mathbb{D}}|f_z(\xi)|^{Nq}d{\Omega}(\xi)\label{aa11}\\
&\lesssim \dfrac{\|i\|^{Nq}_{A_{\sigma}^{Np}\rightarrow L^{Nq}(\Omega)}\|f_{z}\|_{Np,\sigma}^{Nq}}{{\sigma^{\frac{q}{p}}(z)
(1-|z|^{2})^{2\frac{q}{p}-(\frac{\alpha +2}
{2})Nq}}}\notag\\&\le C\sup_{k}\dfrac{\widehat{\Omega}_{r}(z_{k})}{\sigma^{\frac{q}{p}}(z_k)
(1-|z_k|^{2})^{2(\frac{q-p}{p})}}\notag
\end{align}
Hence, for any $z\in \mathbb{D}$ \begin{align*}\dfrac{\widetilde{\Omega}_{Nq}(z)}{\sigma^{\frac{q}{p}}(z)
(1-|z|^{2})^{2\frac{q}{p}-(\frac{\alpha +2}{2})Nq}}
&\le C\|i\|^{Nq}_{A_{\sigma}^{Np}\rightarrow L^{Nq}(\Omega)}\\
&\asymp C\sup_{k}\dfrac{\widehat{\Omega}_{r}(z_{k})}{\sigma^{\frac{q}{p}}(z_k)
(1-|z_k|^{2})^{2(\frac{q-p}{p})}}
\end{align*}
Statement $(a)$ shows that the operator $i : A_{\sigma}^{\frac{tp}{Nq}}\rightarrow L^{\frac{t}{N}}(\Omega)$ is bounded.
Since $t>\frac{2q}{p(\alpha+2)}$, we have $\frac{tp(\alpha+2)}{q}>2$. The above calculations show that \begin{align*} \sup_{z\in \mathbb{D}}\dfrac{\widetilde{\Omega}_{t}(z)}
{\sigma^{\frac{q}{p}}(z)(1-|z|
^{2})^{2\frac{q}{p}-(\frac{\alpha +2}
{2})t}}&\le C\|i\|^{Nq}_{A_{\sigma}^{Np}\rightarrow L^{Nq}(\Omega)}\\ &\asymp C\sup_{k}\dfrac{\widehat{\Omega}_{r}(z_{k})}
{\sigma^{\frac{q}{p}}(z_k)
(1-|z_k|^{2})^{2(\frac{q-p}{p})}}
\end{align*}
This completes the proof.
\end{proof}
\begin{cor}\label{cor:continuous}  Let $t>0$, $0<p<{\infty}$, $d\Omega=\sigma dA$ is a positive measure and ${C}_{\psi,\varphi} : A_{\sigma}^{p}\to L^{p}(\Omega)$ be a bounded operator. Then the following statements are equivalent.
\begin{enumerate}
\item [{(a)}] ${C}_{\psi,\varphi} $ is power bounded, that is,
 $$Q_1 = \sup_{n \in \mathbb N}\|C^n_{\psi,\varphi}\|^p < \infty. $$
\item [{(b)}] The sequence of functions $\{f_n\}_{n = 1}^\infty$ defined on $\mathbb D$ as  $$f_n(z) = \frac{\widetilde{{\Omega}}_{n,t}(z)}{\sigma(z) \delta^{2- ({\alpha +2})t/
{2}}(z)}, \; z \in \mathbb D$$ is a norm bounded family  in $L^{\infty}(\mathbb{D})$ for $t >  {2}/{(\alpha+2)}$, that is,
 $$Q_2 = \sup_{n \in \mathbb N}\|f_n\|_\infty = {\rm sup}_{n \in \mathbb N} {\rm ess\: sup}_{z \in \mathbb D}\frac{\widetilde{{\Omega}}_{n,t}(z)}{\sigma(z) \delta^{2- ({\alpha +2})t/
{2}}(z)} < \infty. $$
\item [{(c)}] The sequence of functions $\{g_n\}_{n = 1}^\infty$ defined on $\mathbb D$ as  $$g_n(z)  = \frac{\widehat{\Omega}_{n,R}(z)}{\sigma(z)}, \; z \in \mathbb D $$ is  a norm bounded family   in $L^{\infty}(\mathbb{D})$ for some (or any) $R\in (0,1)$, that is,
 $$Q_3 = \sup_{n \in \mathbb N}\|g_n\|_\infty = {\rm sup}_{n \in \mathbb N} {\rm ess\: sup}_{z \in \mathbb D}\frac{\widehat{\Omega}_{n,R}(z)}{\sigma(z)} < \infty. $$
\item [{(d)}] The double sequence $\gamma_{nk}=\{\gamma_{n, k}\}_{n, k}$, where $$\gamma_{n, k} = \frac{\widehat{\Omega}_{n,r}(z_k)}{\sigma(z_k)}   $$ is  bounded for some {\rm(}any{\rm)} $r$-lattice $\{z_k\}_{k = 1}^{\infty}$ with fixed $r$, $0<r<1$, that is,
 $$Q_4 = \|\gamma_{nk}\|_{\Lambda^2_\infty} = \sup_{n, k \in \mathbb N}\frac{\widehat{\Omega}_{n,r}(z_k)}{\sigma(z_k)}  < \infty. $$   \end{enumerate}
Moreover,
$ \label{l1} Q_1 \asymp Q_2 \asymp Q_3 \asymp Q_4.$
\end{cor}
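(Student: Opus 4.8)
The plan is to recognize the corollary as the diagonal case $q=p$ of Theorem \ref{1t}, applied not to a single measure but to the entire family $\{\Omega_{n}\}_{n}$ of push-forward measures attached to the iterates of $C_{\psi,\varphi}$, and then to upgrade the resulting pointwise-in-$n$ equivalences to a uniform statement by verifying that the comparison constants in Theorem \ref{1t} do not depend on the measure.

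First I would invoke the identity recorded in the Introduction,
$$\|C_{\psi,\varphi}^{n}f\|_{L^{p}(\Omega)}^{p}=\int_{\mathbb D}|f(z)|^{p}\,d\Omega_{n}(z),\qquad f\in A_{\sigma}^{p},$$
where $d\Omega_{n}=|\langle\psi,\varphi,n\rangle|\,d\Omega\circ\varphi_{n}^{-1}$ is the finite positive Borel measure associated with the $n$-th power. Taking the supremum over the unit ball of $A_{\sigma}^{p}$ converts this into
$$\|C_{\psi,\varphi}^{n}\|^{p}=\|i\|_{A_{\sigma}^{p}\to L^{p}(\Omega_{n})}^{p},$$
so that $C_{\psi,\varphi}^{n}$ is bounded precisely when $\Omega_{n}$ is a $(p,p,\sigma)$-Bergman Carleson measure. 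This identity is the bridge that lets Theorem \ref{1t} speak about iterates of the operator.

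Next I would specialize Theorem \ref{1t} to $q=p$ and apply it to $\Omega_{n}$. With $q=p$ one has $\sigma^{q/p}=\sigma$, the exponent $2q/p-(\tfrac{\alpha+2}{2})t$ reduces to $2-(\tfrac{\alpha+2}{2})t$, the exponent $2(q-p)/p$ vanishes, and the threshold $t>\tfrac{2q}{p(\alpha+2)}$ becomes $t>\tfrac{2}{\alpha+2}$. Under these substitutions the three suprema in the Furthermore part of Theorem \ref{1t}, computed for $\Omega_{n}$, are exactly $\|f_{n}\|_{\infty}$, $\|g_{n}\|_{\infty}$ and $\sup_{k}\gamma_{n,k}$ of items (b), (c), (d). Since its Furthermore part compares the relevant power of the embedding norm to these suprema, I obtain, for each fixed $n$,
$$\|C_{\psi,\varphi}^{n}\|^{p}=\|i\|_{A_{\sigma}^{p}\to L^{p}(\Omega_{n})}^{p}\asymp\|f_{n}\|_{\infty}\asymp\|g_{n}\|_{\infty}\asymp\sup_{k}\gamma_{n,k},$$
which is the $n$-th slice of the desired chain and already yields the equivalence of (a)--(d) for each individual $n$.

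The decisive step, and the one I expect to be the main obstacle, is the \emph{uniformity} of these comparison constants in $n$. I would trace through the proof of Theorem \ref{1t} to confirm that every implied constant there issues only from Lemma \ref{lem1}, Lemma \ref{lem7} (itself built on the Schur-test bounds of Lemma \ref{lem5} and the subharmonicity transfers of Lemmas \ref{lem3}, \ref{lem4}), the lattice overlap constant $M$ of \eqref{a1}, and the fixed data $p,\alpha,\sigma,r,R,t$; none of these depends on the particular finite measure to which the theorem is applied. Hence the displayed equivalence holds with constants common to the entire family $\{\Omega_{n}\}_{n}$. Taking $\sup_{n}$ across the uniform chain then produces $Q_{1}\asymp Q_{2}\asymp Q_{3}\asymp Q_{4}$; in particular each of the four quantities is finite exactly when the others are, which simultaneously establishes the equivalence of (a)--(d) and the concluding norm comparison.
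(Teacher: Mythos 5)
Your proposal is correct and is exactly the route the paper intends: the corollary is stated without proof as the diagonal case $q=p$ of Theorem \ref{1t} applied to the family of measures $\Omega_n$, using the identity $\|C^n_{\psi,\varphi}f\|_{L^p(\Omega)}^p=\int_{\mathbb D}|f|^p\,d\Omega_n$ and the fact that the comparison constants in \eqref{aas1} are independent of the measure. Your explicit attention to the uniformity of the constants in $n$ is the one point the paper leaves tacit, and you handle it correctly.
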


\begin{thm}\label{2t} Let $\Omega \ge 0$ and $0<p\le q<{\infty}$. Then the following statement are equivalent:
\begin{enumerate}
  \item[{(a)}] $\Omega$ is a vanishing $(p,q,\sigma)$-Bergman Carleson measure.
    \item [{(b)}]For $t>\frac{2q}{p(\alpha+2)}$, we have $$\dfrac{\widetilde{\Omega}_{t}(z)}{\sigma^{\frac{q}{p}}(z)(1-|z|^
      {2})^{2\frac{q}{p}-(\frac{\alpha +2}
{2})t}}\longrightarrow 0$$  as $z\longrightarrow\partial\mathbb{D}$.
    \item [{(c)}]For some $($or any$)$ $R\in(0,1)$, we have $$\dfrac{\widehat{\Omega}_{R}(z)}{\sigma^{\frac{q}{p}}(z)(1-|z|^{2
  })^{2(\frac{q-p}{p})}}\longrightarrow 0$$  as $z\longrightarrow\partial\mathbb{D}$.
    \item [{(d)}]For some (or any) $r$-lattice $\{z_k\}_{k=1}^{\infty}$ with $r\in(0,1)$, we have $$\dfrac{\widehat{\Omega}_{r}(z_{k})}
        {\sigma^{\frac{q}{p}}(z_k)(1-|z_k|^
        {2})^{2(\frac{q-p}{p})}}\longrightarrow 0$$ as $k\longrightarrow 0$.
  \end{enumerate}

\end{thm}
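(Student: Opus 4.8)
The plan is to establish the four-way equivalence by mirroring the quantitative estimates of Theorem \ref{1t}, systematically replacing ``uniform boundedness on $\mathbb D$'' by ``decay to $0$ as $z\to\partial\mathbb D$.'' Concretely, I would prove the cycle $(d)\Rightarrow(a)\Rightarrow(c)$, close it with the purely local equivalence $(c)\Leftrightarrow(d)$, and finally treat the Berezin-transform condition through $(b)\Leftrightarrow(c)$. Throughout I set $s=q/p$, so that (b) and (c) are exactly the boundary-vanishing analogues of the $L^\infty$-type quantities $\widetilde M_{t,s}$ and $\widehat M_{R,s}$ appearing in Lemma \ref{lem7}.

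For $(d)\Rightarrow(a)$ I would run the lattice estimate \eqref{aas2}, but split the sum over the $r$-lattice into a finite head $\{k\le N\}$ and a tail $\{k>N\}$. Given a bounded sequence $\{f_n\}\subset A_\sigma^p$ with $f_n\to0$ uniformly on compacta and an $\varepsilon>0$, condition (d) lets me choose $N$ so that each lattice weight $\widehat{\Omega}_r(z_k)/[\sigma^{q/p}(z_k)(1-|z_k|^2)^{2(q-p)/p}]$ with $k>N$ is below $\varepsilon$; the tail is then $\lesssim\varepsilon\,M^{q/p}\sup_n\|f_n\|_{p,\sigma}^q$ exactly as in \eqref{aas2}, while the head involves only finitely many disks $E(z_k,\tfrac{1+r}{2})$ lying in a fixed compact set, on which $\int|f_n|^p\sigma\,dA\to0$. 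Letting $n\to\infty$ and then $\varepsilon\to0$ yields $\int_{\mathbb D}|f_n|^q\,d\Omega\to0$. For $(a)\Rightarrow(c)$ I would replace the fixed kernels $f_z=K^\alpha(z,\cdot)$ of \eqref{aas3} by a normalized family (a suitable power of the reproducing kernel, normalized in $A_\sigma^p$) that is bounded in $A_\sigma^p$ and tends to $0$ uniformly on compact subsets as $z\to\partial\mathbb D$; applying the definition of vanishing Carleson measure to this family along any sequence $z_n\to\partial\mathbb D$, and reusing the two-sided kernel estimate $|f_z|\asymp(1-|z|^2)^{-(\alpha+2)}$ on $E(z,R)$ as in \eqref{aas3}, forces $\widehat{\Omega}_R(z)/[\sigma^{q/p}(z)(1-|z|^2)^{2(q-p)/p}]\to0$.

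The equivalence $(c)\Leftrightarrow(d)$ is local: by the pointwise comparison underlying \eqref{aa2}, for $z\in E(z_k,r)$ one has $\widehat{\Omega}_r(z_k)/[\sigma^s(z_k)(1-|z_k|^2)^{2(s-1)}]\asymp\widehat{\Omega}_{2r}(z)/[\sigma^s(z)(1-|z|^2)^{2(s-1)}]$, and $z\to\partial\mathbb D$ forces the index of any lattice point with $z\in E(z_k,r)$ to infinity (and conversely), so the boundary limit and the lattice limit transfer into one another. For $(b)\Rightarrow(c)$ the pointwise inequality \eqref{aas6} coming from Lemma \ref{lem7} gives $\widehat M_{R,s}(z)\lesssim\widetilde M_{t,s}(z)$, so decay in (b) forces decay in (c). The substantive direction is $(c)\Rightarrow(b)$, which I would handle by decomposing $\Omega=\Omega_\rho+\Omega^\rho$ with $\Omega_\rho=\Omega|_{\{|\xi|\le\rho\}}$ and $\Omega^\rho=\Omega|_{\{|\xi|>\rho\}}$: by (c) the boundary piece $\Omega^\rho$ has arbitrarily small Carleson constant as $\rho\to1$, so the quantitative form \eqref{aas1} of Theorem \ref{1t} bounds its Berezin quotient uniformly by that constant, while the compactly supported piece satisfies $\widetilde{(\Omega_\rho)}_t(z)=\int_{\{|\xi|\le\rho\}}|k_z^\alpha(\xi)|^t\,d\Omega(\xi)\to0$ because $|k_z^\alpha(\xi)|\to0$ uniformly for $\xi$ in a compact set as $|z|\to1$.

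The main obstacle is precisely this last step. The two pieces must be balanced in the correct order (first fix $\rho$ to push the $\Omega^\rho$-contribution uniformly below $\varepsilon$, then send $z\to\partial\mathbb D$ to kill the $\Omega_\rho$-contribution), and the compact piece must be shown to decay \emph{after} division by $\sigma^{q/p}(z)(1-|z|^2)^{2q/p-(\frac{\alpha+2}{2})t}$. This is delicate: it forces one to exploit the admissibility of $\sigma$ (the two-sided control $(1-|z|^2)^{1+\delta}\lesssim\sigma(z)\le\sigma(0)$) in order to weigh the growth of $\sigma^{-q/p}(z)$ against the gain $(1-|z|^2)^{(\alpha+2)t-2q/p}$ furnished by the numerator, and it is the point where the hypothesis on $t$ must be used. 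A secondary care point is the choice of the vanishing test family in $(a)\Rightarrow(c)$: the kernel power must be taken large enough that uniform decay on compacta genuinely holds for every $0<p<\infty$, not merely for large $p$.
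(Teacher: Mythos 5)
Your architecture coincides with the paper's on most of the theorem: $(d)\Rightarrow(a)$ by splitting the lattice sum of \eqref{aas2} into a head over a fixed compact set and a tail controlled by the small lattice weights, $(a)\Rightarrow(c)$ by testing the vanishing Carleson property on normalized kernels as in \eqref{aas3}, and $(b)\Rightarrow(c)$ from \eqref{aas6} are exactly the paper's steps (the paper closes the cycle with the trivial $(c)\Rightarrow(d)$ instead of your direct local $(c)\Leftrightarrow(d)$, which is immaterial). Your remark that the test family must be a sufficiently high kernel power for the uniform decay on compacta to hold for all $0<p<\infty$ is a legitimate refinement that the paper glosses over.

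The genuine divergence is the Berezin condition, and there your plan has a real gap. The paper never proves $(c)\Rightarrow(b)$; it proves $(a)\Rightarrow(b)$ by rescaling exponents ($(p,q)\mapsto(Np,Nq)$, then $(\tfrac{tp}{Nq},\tfrac tN)$) and testing the vanishing Carleson property on kernels normalized in the rescaled space, so that $\widetilde\Omega_t(z)$ is absorbed into an integral $\int|g_z|^{t'}\,d\Omega\to0$. Your cutoff decomposition $\Omega=\Omega_\rho+\Omega^\rho$ fails at precisely the step you flag as delicate: for the compactly supported piece, $|k_z^\alpha(\xi)|^t\asymp(1-|z|^2)^{(\alpha+2)t/2}$ uniformly for $\xi$ in the support, so
$$\frac{\widetilde{(\Omega_\rho)}_t(z)}{\sigma^{q/p}(z)(1-|z|^2)^{\frac{2q}{p}-\frac{(\alpha+2)t}{2}}}\asymp\Omega_\rho(\mathbb{D})\,\frac{(1-|z|^2)^{(\alpha+2)t-\frac{2q}{p}}}{\sigma^{q/p}(z)},$$
and admissibility only yields $\sigma(z)\gtrsim(1-|z|)^{1+\delta}$, so the right-hand side is forced to $0$ only when $(\alpha+2)t>(3+\delta)\frac{q}{p}$ --- strictly stronger than the assumed $(\alpha+2)t>\frac{2q}{p}$. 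Concretely, for $\sigma(z)=(1-|z|^2)^{1+\delta}$ (which is admissible) and $\Omega$ a point mass at the origin (a vanishing Carleson measure), the quotient equals $(1-|z|^2)^{(\alpha+2)t-(3+\delta)q/p}$, which does not vanish for $t$ in the intermediate range. So the compact piece cannot be disposed of as you describe over the full stated range of $t$; you would have to restrict $t$ in terms of $\delta$, or adopt the paper's rescaling route. (The same computation suggests the threshold on $t$ in statement (b) is really weight-dependent, but within the theorem as stated it is your step that does not close.)
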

\begin{proof}
The implication $(c)\Rightarrow(d)$ is trivial because $z_{k}\rightarrow\partial\mathbb{D}$ as $k\rightarrow{\infty}$ whenever $\{z_k\}_{k=1}^{\infty}$ is an $r$-lattice. It follows from \eqref{aas6} that $(b)\Rightarrow (c)$.\\
$(a)\Rightarrow (c)$ Given $0<R<1$. For $z\in\mathbb{D}$, we set $f_{z}(\xi)=\dfrac{K_z^{\alpha}(\xi)}{\sigma^{\frac{1}{p}}(z)(1-|z|^{2})^{\frac
{2}{p}-(\alpha +2)}}$,\;\;$\xi\in\mathbb{D}$. One can easily find that $f_z\in A_{\sigma}^{p}$, $\|f_{z}\|_{p,\sigma}\le C$ and $f_{z}\rightarrow 0$ uniformly on any compact subset of $\mathbb{D}$ as $z\rightarrow \partial\mathbb{D}$. Since $\Omega$ is a vanishing $(p,q,\sigma)-$Bergman Carleson measure, it follows from \eqref{aas3} that $$\dfrac{\widehat{\Omega}_{R}(z)}{\sigma^
{\frac{q}{p}}(z)(1-|z|^{2})^{2(\frac{q-p}{p})}}\le C \dfrac{(1-|z|^{2})^{q(\alpha +2-\frac{2}{p})}}{\sigma^{\frac{q}{p}}(z)}
\int_{\mathbb{D}}|f_{z}(\xi)|^{q}d\Omega(\xi)\longrightarrow 0 $$ as $z\rightarrow\partial\mathbb{D}.$\\

$(d)\Rightarrow (a)$. Suppose $(d)$ holds. For any $\epsilon>0$, there exists a positive integer $k_0$ such that
$\dfrac{\widehat{\Omega}_{r}(z_{k})}{\sigma^
{\frac{q}{p}}(z_k)(1-|z_k|^{2})^{2(\frac{q-p}{p})}}< \epsilon$, whenever $k>k_0$. Notice that $\cup_{k=1}^{k_0}E(z_k,r)$ is relatively compact in $\mathbb{D}$. Let us consider a bounded sequence $\{f_j\}_{j=1}^{\infty}$ in $A_{\sigma}^{p}$ such that $f_j\longrightarrow 0$ uniformly on any compact subset of $\mathbb{D}$  as $j\rightarrow{\infty}$. Similar to the proof of \eqref{aas2} and if $j$ is large enough, we have that
\begin{align}\label{aas7}\int_{\mathbb{D}}|f_{j}(z)|^{q}d\Omega(z)&\le \int_{\cup_{k=1}^{k_0}E(z_k,r)}|f_{j}(z)|^{q}d\Omega(z)+\sum_{k=k_{0}+1}
^{\infty}\int_{E(z_k,r)}|f_{j}(z)|^{q}d\Omega(z)\notag\\&\le C\epsilon\|f_j\|_{p,\sigma}^{q}\notag\\&\le C\epsilon.
\end{align} where $C$ is independent of $\epsilon$.\\

 $(a)\Rightarrow (b)$ The equivalence of $(a)$, $(c)$ and $(d)$ shows that the measure $\Omega$ is a vanishing $(Np,Nq,\sigma)$-Bergman Carleson measure if $\Omega$ is a vanishing $(p,q,\sigma)$-Bergman Carleson measure. For $z\in \mathbb{D}$, set $f_{z}(\xi)=\dfrac{K_z^{\alpha}(\xi)}{\sigma^
 {\frac{1}{p}}(z)(1-|z|^{2})^{\frac
{2}{p}-(\alpha +2)}}$,\;\;$\xi\in\mathbb{D}$. One can easily find that $f_z\in A_{\sigma}^{p}$, $\|f_{z}\|_{p,\sigma}\le C$ and $f_{z}\rightarrow 0$ uniformly on any compact subset of $\mathbb{D}$ as $z\rightarrow\partial\mathbb{D}$. Since $Np>\frac{4}{(\alpha+2)}$ and it follows from \eqref{aa11}, we have that $$\dfrac{\widetilde{\Omega}_{Nq}(z)}{\sigma^{\frac{q}{p}}(z)
(1-|z|^{2})^{2\frac{q}{p}-(\frac{\alpha +2}{2})Nq}}
\asymp \dfrac{1}{\sigma^{\frac{q}{p}}(z)(1-|z|^{2})^{2\frac{q}{p}-(\frac{\alpha +2}
{2})Nq}}\int_{\mathbb{D}}|f_z(\xi)|^{Nq}d{\Omega}(\xi).$$ Statement $(a)$ yields that $\Omega$ is a vanishing $(\frac{tp}{Nq},\frac{t}{N},\sigma)-$Bergman Carleson measure. Therefore $$\lim_{z\rightarrow\partial\mathbb{D}}\dfrac{\widetilde
{\Omega}_{Nq}(z)}{\sigma^{\frac{q}{p}}(z)
(1-|z|^{2})^{2\frac{q}{p}-(\frac{\alpha +2}{2})Nq}}=0.$$ The proof is completed.
 \end{proof}
\begin{cor}\label{cor:continuous}  Let $t>0$, $0<p<{\infty}$, $d\Omega=\sigma dA$ is a positive measure and ${C}_{\psi,\varphi} : A_{\sigma}^{p}\to L^{p}(\Omega)$ be a bounded operator. Then the following statements are equivalent.
\begin{enumerate}
\item [{(a)}] ${C}_{\psi,\varphi} $ is power compact, that is, ${C}^{n}_{\psi,\varphi} $ is compact, for some $n\in \NN$.
    \item [{(b)}]For $t>\frac{2q}{p(\alpha+2)}$, we have $$\dfrac{\widetilde{\Omega}_{n,t}(z)}{\sigma^{\frac{q}{p}}(z)(1-|z|^
      {2})^{2\frac{q}{p}-(\frac{\alpha +2}
{2})t}}\longrightarrow 0$$  as $z\longrightarrow\partial\mathbb{D}$.
    \item [{(c)}]For some $($or any$)$ $R\in(0,1)$, we have $$\dfrac{\widehat{\Omega}_{n,R}(z)}{\sigma^{\frac{q}{p}}(z)(1-|z|^{2
  })^{2(\frac{q-p}{p})}}\longrightarrow 0$$  as $z\longrightarrow\partial\mathbb{D}$.
    \item [{(d)}]For some (or any) $r$-lattice $\{z_k\}_{k=1}^{\infty}$ with $r\in(0,1)$, we have $$\dfrac{\widehat{\Omega}_{n,r}(z_{k})}
        {\sigma^{\frac{q}{p}}(z_k)(1-|z_k|^
        {2})^{2(\frac{q-p}{p})}}\longrightarrow 0$$ as $k\longrightarrow 0$.
  \end{enumerate}
\end{cor}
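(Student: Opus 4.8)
The plan is to reduce the statement to an application of Theorem \ref{2t} to the pushforward measure $\Omega_n = |\langle\psi,\phi,n\rangle|\,\Omega\circ\phi^{-1}_n$ introduced in Section \ref{intro}. The starting point is the change-of-variables identity recorded there, namely
\[
\|C^{n}_{\psi,\varphi}f\|_{L^{p}(\Omega)}^{p} = \int_{\mathbb D}|f(z)|^{p}\,d\Omega_{n}(z) = \|f\|_{L^{p}(\Omega_{n})}^{p},\qquad f\in A_{\sigma}^{p}.
\]
Fixing $n\in\NN$, I would argue that $C^{n}_{\psi,\varphi}$ is compact if and only if $\Omega_{n}$ is a vanishing $(p,p,\sigma)$-Bergman Carleson measure; power compactness in $(a)$ then amounts to the existence of some $n$ for which this holds, and $\Omega_n$ is a positive Borel measure to which Theorem \ref{2t} applies with $q=p$.

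For this reduction I would invoke the standard normal-families criterion for compactness of operators on $A_{\sigma}^{p}$: a bounded operator $T\colon A_{\sigma}^{p}\to L^{p}(\mu)$ is compact if and only if $\|Tf_{j}\|_{L^{p}(\mu)}\to 0$ for every sequence $\{f_{j}\}$ that is bounded in $A_{\sigma}^{p}$ and converges to $0$ uniformly on compact subsets of $\mathbb D$. The growth estimate in Lemma \ref{lem1}$(1)$ shows that norm-bounded families in $A_{\sigma}^{p}$ are locally uniformly bounded, so Montel's theorem extracts from any bounded sequence a subsequence converging uniformly on compacta to a limit in $A_{\sigma}^{p}$; applying the criterion to the differences gives compactness, while the converse is routine. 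Combined with the identity above, the criterion for $C^{n}_{\psi,\varphi}$ reads $\int_{\mathbb D}|f_{j}|^{p}\,d\Omega_{n}\to 0$ for every such $\{f_{j}\}$, which is exactly the definition of $\Omega_{n}$ being a vanishing $(p,p,\sigma)$-Bergman Carleson measure. I expect this equivalence, rather than the subsequent bookkeeping, to be the main technical point.

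Once the reduction is in place, I would simply apply Theorem \ref{2t} with $\Omega$ replaced by $\Omega_{n}$ and with $q=p$. By the definitions \eqref{a2} and \eqref{a3}, the $t$-Berezin transform and the averaging function of $\Omega_{n}$ are precisely the quantities $\widetilde{\Omega}_{n,t}$ and $\widehat{\Omega}_{n,R}$ (respectively $\widehat{\Omega}_{n,r}(z_{k})$ on an $r$-lattice) appearing in $(b)$, $(c)$ and $(d)$; upon setting $q=p$ the exponent $2(q-p)/p$ collapses to $0$ and $2q/p$ becomes $2$, so conditions $(b)$–$(d)$ of Theorem \ref{2t} turn into the stated boundary-decay conditions verbatim. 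The equivalence $(b)\Leftrightarrow(c)\Leftrightarrow(d)$ is thus inherited directly from Theorem \ref{2t}, and combining it with the compactness reduction of the previous paragraph closes the chain $(a)\Leftrightarrow(b)\Leftrightarrow(c)\Leftrightarrow(d)$.
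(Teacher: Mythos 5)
Your proposal is correct and follows exactly the route the paper intends: the corollary is stated without an explicit proof as an immediate consequence of Theorem \ref{2t} applied with $q=p$ to the measure $\Omega_n$, via the identity $\|C^{n}_{\psi,\varphi}f\|_{L^{p}(\Omega)}^{p}=\|f\|_{L^{p}(\Omega_{n})}^{p}$. The one substantive step you add --- the normal-families equivalence between compactness of $C^{n}_{\psi,\varphi}$ and $\Omega_{n}$ being a vanishing $(p,p,\sigma)$-Bergman Carleson measure --- is precisely the standard fact the paper leaves implicit, and your sketch of it (local uniform boundedness from Lemma \ref{lem1}, Montel, and pointwise identification of the limit for the converse) is sound.
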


\begin{thm}\label{3t}  Let $\Omega$ be a finite positive Borel measure and $0<q<p<{\infty}$. Then the following statements are equivalent:
\begin{enumerate}
  \item[{(a)}] $\Omega$ is a $(p,q,\sigma)-$Bergman Carleson measure.
  \item[{(b)}] $\Omega$ is a vanishing $(p,q,\sigma)-$Bergman Carleson measure.
\item [{(c)}]For $t>\frac{2(q+p)}{p(\alpha+2)}$, we have $$\widetilde{M}_{t}(z)=\dfrac{\widetilde
    {\Omega}_{t}(z)}{\sigma^{\frac{q}{p}}(z)(1-|z|^
      {2})^{2\frac{q}{p}+1-(\frac{\alpha +2}{2})t}} \in L^{\frac{p}{p-q}}(\mathbb{D}).$$
    \item [{(d)}]For some $($or any$)$ $R\in (0,1)$, we have $$\widehat{M}_{R}(z)=\dfrac{\widehat
        {\Omega}_{R}(z)}{\sigma^{\frac{q}{p}}(z)}\in L^{\frac{p}{p-q}}(\mathbb{D}). $$
    \item [{(e)}]For some (or any) $r$-lattice $\{z_k\}_{k=1}^{p}$ with $r\in(0,1)$, we have $$\dfrac{\widehat{\Omega}_{r}(z_{k})}
        {\sigma^{\frac{q}{p}}(z_k)(1-|z_k|^{2})^{2(\frac{q-p}{p})}} \in l^{\frac{p}{p-q}}(\mathbb{D}).$$
  \end{enumerate}
 Moreover, we have \begin{align}\label{aas8} \|i\|_{A_{\sigma}^{p}\rightarrow L^{p}(\Omega)}^{q}&\asymp \|\widetilde{M}_{t}\|_{\frac{p}{p-q}}&\asymp
 \|\widehat{M}_{R}\|_{\frac{p}{p-q}} &\asymp \bigg\|\bigg\{\dfrac{\widehat{\Omega}_{r}(z_{k})}{\sigma^{\frac{q}{p}}(z_k)
(1-|z_{k}|^{2})^{2(\frac{q-p}{p})}}\bigg\}_{k=1}
^{\infty}\bigg\|_{l^{\frac{p}{p-q}}}.\end{align}
\end{thm}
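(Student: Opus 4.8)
The plan is to make the lattice condition $(e)$ the hub of the argument: I will prove $(e)\Rightarrow(a)$ and $(a)\Rightarrow(e)$ directly, deduce $(c)\Leftrightarrow(d)\Leftrightarrow(e)$ from the Berezin--averaging machinery behind Lemma \ref{lem7}, and treat $(a)\Leftrightarrow(b)$ separately, since for $0<q<p$ a bounded Carleson embedding is automatically a vanishing one. Throughout set $P=\frac{p}{p-q}$, so that $P$ and $\frac{p}{q}$ are conjugate exponents; this H\"older conjugacy, which is vacuous in the regime $p\le q$ of Theorem \ref{1t}, is the algebraic mechanism producing the index $\frac{p}{p-q}$ in $(c)$--$(e)$. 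I write $\|i\|$ for $\|i\|_{A_{\sigma}^{p}\to L^{q}(\Omega)}$.

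For the sufficiency $(e)\Rightarrow(a)$ I would fix an $r$-lattice $\{z_k\}_{k=1}^{\infty}$ and discretize as in the first steps of \eqref{aas2}, writing $\int_{\mathbb{D}}|f|^{q}\,d\Omega=\sum_k\int_{E(z_k,r)}|f|^{q}\,d\Omega\le\sum_k\widehat{\Omega}_{r}(z_k)\,|E(z_k,r)|\big(\sup_{E(z_k,r)}|f|\big)^{q}$. Bounding $\sup_{E(z_k,r)}|f|^{p}$ by $\sigma(z_k)^{-1}(1-|z_k|^{2})^{-2}\int_{E(z_k,(1+r)/2)}|f|^{p}\sigma\,dA$ via Lemma \ref{lem1}, and using $|E(z_k,r)|\asymp(1-|z_k|^{2})^{2}$, the sum assumes the shape $\sum_k b_k A_k^{q/p}$, where $b_k=\widehat{\Omega}_{r}(z_k)(1-|z_k|^{2})^{2(p-q)/p}\sigma^{-q/p}(z_k)$ is exactly the sequence in $(e)$ and $A_k=\int_{E(z_k,(1+r)/2)}|f|^{p}\sigma\,dA$. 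H\"older with exponents $P$ and $\frac{p}{q}$, followed by the finite-overlap bound \eqref{a1} to sum the $A_k$, yields $\int_{\mathbb{D}}|f|^{q}\,d\Omega\lesssim\big\|\{b_k\}\big\|_{l^{P}}\,\|f\|_{p,\sigma}^{q}$, which is $(a)$ together with the half $\|i\|^{q}\lesssim\|\{b_k\}\|_{l^{P}}$ of \eqref{aas8}.

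The reverse implication $(a)\Rightarrow(e)$ is the heart of the theorem and the step I expect to be the main obstacle, since it cannot be read off from a sup-norm estimate as in Theorem \ref{1t}; instead it calls for Luecking's randomization. By $l^{P}$--$l^{p/q}$ duality it suffices to bound $\sum_k b_k c_k$ for every nonnegative $\{c_k\}\in l^{p/q}$. Writing $c_k=|\lambda_k|^{q}$ and forming the random functions $f_\omega(z)=\sum_k\lambda_k\,\varepsilon_k(\omega)\,\dfrac{K^{\alpha}(z_k,z)}{\sigma^{1/p}(z_k)(1-|z_k|^{2})^{2/p-(\alpha+2)}}$ with $\{\varepsilon_k\}$ the Rademacher functions, Lemma \ref{lem6} gives $\|f_\omega\|_{p,\sigma}\lesssim\|\{\lambda_k\}\|_{l^{p}}$ uniformly in $\omega$, so $(a)$ forces $\int_{\mathbb{D}}|f_\omega|^{q}\,d\Omega\le\|i\|^{q}\|f_\omega\|_{p,\sigma}^{q}$. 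Integrating in $\omega$ and applying Khinchine's inequality to replace $\int_0^1\big|\sum_k\lambda_k\varepsilon_k(\omega)a_k(z)\big|^{q}\,d\omega$ by $\big(\sum_k|\lambda_k|^{2}|a_k(z)|^{2}\big)^{q/2}$, where $a_k$ denotes the normalized kernel above, then keeping only the single surviving term on each $E(z_k,r)$, where $|K^{\alpha}(z_k,z)|\asymp(1-|z_k|^{2})^{-(\alpha+2)}$, lower-bounds the left-hand side by a constant times $\sum_k|\lambda_k|^{q}b_k=\sum_k c_k b_k$. Taking the supremum over $\|\{c_k\}\|_{l^{p/q}}\le1$ converts this into $\|\{b_k\}\|_{l^{P}}\lesssim\|i\|^{q}$, closing the norm equivalence. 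The delicate points here are the uniform-in-$\omega$ control from Lemma \ref{lem6} and the precise matching of the Khinchine exponent with the $l^{P}$--$l^{p/q}$ duality.

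It then remains to connect the three size descriptions and to settle $(a)\Leftrightarrow(b)$. The equivalence $(d)\Leftrightarrow(e)$ I would obtain by comparing $\int_{\mathbb{D}}(\widehat{\Omega}_{R}/\sigma^{q/p})^{P}\,dA$ with the Riemann-type sum $\sum_k(\widehat{\Omega}_{r}(z_k)/\sigma^{q/p}(z_k))^{P}(1-|z_k|^{2})^{2}$ via Lemma \ref{lem4} and \eqref{a1}; the factor $(1-|z_k|^{2})^{2}\asymp|E(z_k,r)|$ is precisely what reconciles the exponent $0$ in $(d)$ with $2(q-p)/p=-2/P$ in $(e)$. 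The equivalence $(c)\Leftrightarrow(d)$ follows the scheme of $(a)\Leftrightarrow(b)$ in Lemma \ref{lem7}: one direction rests on the pointwise bound $\widehat{\Omega}_{R}(z)\lesssim(1-|z|^{2})^{(\alpha+2)t/2-2}\,\widetilde{\Omega}_{t}(z)$, obtained by comparing $|k_z^{\alpha}|$ with its size on $E(z,R)$, while the reverse uses Lemma \ref{lem3} together with the boundedness on $L^{P}(\mathbb{D})$ of the integral operator $T_{t,s}$ furnished by Lemma \ref{lem5}. Finally, $(b)\Rightarrow(a)$ is immediate, and for $(a)\Rightarrow(b)$ I would exploit that $q<p$ makes the embedding compact: given a bounded sequence $\{f_j\}$ in $A_{\sigma}^{p}$ with $f_j\to0$ uniformly on compacta, I split $\int_{\mathbb{D}}|f_j|^{q}\,d\Omega$ over $\bigcup_{k\le k_0}E(z_k,r)$, where it vanishes in the limit by uniform convergence, and over the tail $\sum_{k>k_0}$, which the H\"older estimate of the first step bounds by $\big(\sum_{k>k_0}b_k^{P}\big)^{1/P}\|f_j\|_{p,\sigma}^{q}$; choosing $k_0$ large makes the tail arbitrarily small. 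Assembling the four comparisons gives the norm chain \eqref{aas8}.
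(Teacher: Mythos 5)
Your proposal is correct and, in its essential content, follows the same route as the paper: the hard implication $(a)\Rightarrow(e)$ is done in both by Luecking's randomization (Lemma \ref{lem6} test functions, Rademacher functions, Khinchine's inequality, then $l^{p/(p-q)}$--$l^{p/q}$ duality), and the equivalence of the three size conditions $(c)$, $(d)$, $(e)$ rests in both on the Berezin/averaging machinery of Lemmas \ref{lem3}--\ref{lem5} packaged in Lemma \ref{lem7}. The two places where you genuinely deviate are organizational. First, for sufficiency you prove $(e)\Rightarrow(a)$ by discretizing over the lattice and applying H\"older with exponents $\frac{p}{p-q}$ and $\frac{p}{q}$ to $\sum_k b_kA_k^{q/p}$, whereas the paper proves $(d)\Rightarrow(a)$ in the continuous setting, using the subharmonicity estimate of Lemma \ref{lem3} to replace $d\Omega$ by $\widehat{\Omega}_R\,dA$ and then H\"older; the two are interchangeable once Lemma \ref{lem7} is in hand, and your discrete version has the advantage of feeding directly into the vanishing argument. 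Second, for $(a)\Rightarrow(b)$ you split off the lattice tail $\sum_{k>k_0}$ and control it by $\bigl(\sum_{k>k_0}b_k^{p/(p-q)}\bigr)^{(p-q)/p}\|f_j\|_{p,\sigma}^{q}$, which tends to $0$ as $k_0\to\infty$ because $\{b_k\}\in l^{p/(p-q)}$; the paper instead exhausts $\mathbb{D}$ by compact sets $F$, restricts $\Omega$ to the complement, and invokes dominated convergence for $\|\widehat{(\Omega_F)}_R/\sigma^{q/p}\|_{p/(p-q)}$. Your discrete tail argument is the cleaner of the two (the paper's wording about ``the restriction of $\Omega$ to $F$'' should read the restriction to $\mathbb{D}\setminus F$ for its limit claim to make sense), and your explicit bookkeeping for $(d)\Leftrightarrow(e)$ -- matching the exponent $2(\tfrac{q-p}{p})=-\tfrac{2}{P}$ in $(e)$ against $|E(z_k,r)|\asymp(1-|z_k|^2)^2$ -- is more careful than the paper's bare citation of Lemma \ref{lem7}, whose stated exponents do not line up verbatim with those in the theorem. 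No gap; the proposal stands.
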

\begin{proof}
 Since Lemma  \ref{lem7}  implies that the statements $(c)$, $(d)$ and $(e)$ are equivalent with the corresponding norm estimate (\ref{aas8}) and the implication $(b)\Rightarrow (a)$ is trivially true, it is sufficient to prove that $(d)\Rightarrow (a)$, $(a)\Rightarrow (e)$ and $(a)\Rightarrow (b)$.\\
$(d)\Rightarrow (a)$. Since $0<q<p<{\infty}$ implies $\frac{p}{q}>1$ ,therefore the conjugate exponent of $\frac{p}{q}$ is $\frac{p}{p-q}$. For $f\in A_{\sigma}^{p}$, we have
\begin{align*}\int_{\mathbb{D}}|f(\xi)|^{q}d\Omega(\xi)&\le C\int_{\mathbb{D}}|f(\xi)|^{q}\widehat{\Omega}_{R}(\xi)dA(\xi)\\&\le C\bigg\|\widehat{M}_{R}\bigg\|_{\frac{p}{p-q}}\|f\|_{p,\sigma}^{q}.
\end{align*} Above inequality follows from Lemma \ref{lem3}  and Holder's inequality shows that $\Omega$ is a $(p,q,\sigma)-$Bergman Carleson measure and $\|i\|_{A_{\sigma}^{p}\rightarrow L^{p}(\Omega)}^{q}\leq C\|\widehat{M}_{R}\|{\frac{p}{p-q}}$.\\
$(a)\Rightarrow (e)$. Let $\{\lambda_k\}_{k=1}^{\infty}\in l^{p}$ and set $f$ as in  Lemma  \ref{lem6}. Statement $(a)$ and Lemma \ref{lem6}  implies that \begin{equation}\label{e1}\int_{\mathbb{D}}\bigg|\sum_{k=1}^{\infty}
\lambda_{k}\dfrac{K_{z_k}^{\alpha}(z)}{\sigma^
{\frac{1}{p}}(z_k)(1-|z_k|^{2})^{\frac{2}{p}-(\alpha +2)p}}\bigg|^{q}d\Omega(z)\le C\|i\|_{A_{\sigma}^{p}\rightarrow L^{q}}^{q}\|\{\lambda_k\}_{k}\|_{l^p}^{q}.\end{equation}
Recall that Rademacher functions $\psi_k$ are defined by
 \[\psi_{0}(t) = \left\{
              \begin{array}{ll} \; 1,\; & \text{ if } 0\leq t-[t]<{1}/{2}  \\
                  -1,\; & \text{ if }    {1}/{2}\leq  t-[t]<1              \end{array}
       \right.
\] and $\psi_k(t)=\psi_{0}(2^{k}t)$ for $k=1,2,\cdots$, where $[t]$ denotes the greatest integer less than or equal to $t$. For $0<q<\infty$, Khinchine's inequality is given as
 $$C_1 \bigg( \displaystyle \sum_{k=1}^{m}|b_k|^2\bigg)^{ {q}/{2}}\leq \int_{0}^{1}\bigg|\displaystyle\sum_{k=1}^{m}b_k\psi_{k}(t)\bigg|^{q}dt\leq C_2\bigg(\sum_{k=1}^{m}|b_k|^{2}\bigg)^{ {q}/{2}}, $$
which  holds  for all $m\geq1$ and all complex numbers $b_1,b_2,\cdots b_m$.
Let $\psi_k(t)$ be the $k$th Rademacher function on $[0,1]$. Replacing $\lambda_k$ with $\psi_k(t)\lambda_k$, integrating w.r.t $t$ from $0$ to $1$ and applying Khinchine's inequality in [\ref{e1}], we see that $$\int_{\mathbb{D}}\bigg(\sum_{k=1}^{\infty}|\lambda_{k}|^{2}
\dfrac{|K_{z_k}^{\alpha}(z)|^{2}}{\sigma^
{\frac{2}{p}}(z_k)(1-|z_k|^{2})^{2(\frac{2}{p}-(\alpha +2))}}\bigg)^{\frac{q}{2}}d\Omega(z)\le C\|i\|_{A_{\sigma}^{p}\rightarrow L^{q}}^{q}\|\{\lambda_k\}_{k}\|_{l^p}^{q}.$$
Thus, we have
\begin{align}\label{aas9}
&\sum_{k>k_0}|\lambda_k|^{q}\dfrac{\widehat
{\Omega}_{r}(z_{k})}{\sigma^{\frac{q}{p}}(z_k)(1-|z_k|^{2})^{2(\frac{q-p}{p})}}\notag\\ &\asymp \sum_{k>k_0}\int_{E(z_k,r)}|\lambda_k|^{q}|K_{z_k}^{\alpha}(z)|^{q}\sigma ^{\frac{q}{p}}(z)(1-|z|^{2})^{(\alpha+2)q-\frac{2q}{p}}d\Omega(z)\notag\\ &\le C\sum_{k>k_0}\int_{E(z_k,r)}\bigg(\sum_{k=1}^{\infty}|\lambda_{k}|^{2}\dfrac
{|K_{z_k}^{\alpha}(z)|^{2}}{\sigma^{\frac{2}{p}}(z_k)(1-|z_k|^{2})^{2(\frac{2}{p}-(\alpha +2))}}\bigg)^{\frac{q}{2}}d\Omega(z)\notag\\
&\le C\int_{\mathbb{D}}\bigg(\sum_{k=1}^{\infty}|\lambda_{k}|^{2} \dfrac{|K_{z_k}^{\alpha}(z)|^{2}}{\sigma^{\frac{2}{p}}(z_k)(1-|z_k|^{2})^{2(\frac{2}{p}-(\alpha +2))}}\bigg)^{\frac{q}{2}}d\Omega(z)\notag\\
&\le C\|i\|_{A_{\sigma}^{p}\rightarrow L^{q}}^{q}\|\{\lambda_k\}_{k}\|_{l^p}^{q}.
\end{align}
Since $\cup_{k=1}^{k_0}E(z_k,R)$ is relatively compact in $\mathbb{D}$, so $$\sum_{k=0}^{\infty}|\lambda_k|^{q}\dfrac{\widehat
{\Omega}_{r}(z_{k})}
{\sigma^{\frac{q}{p}}(z_k)(1-|z_k|^{2})^{2(\frac{q-p}{p})}}
\le C\|i\|_{A_{\sigma}^{p}\rightarrow L^{q}}^{q}\|\{\lambda_k\}_{k}\|_{l^p}^{q}.$$
Setting $c_k=|\lambda_k|^{q}$, for each $k$, then $\{c_k\}_{k=1}^{\infty}\in l^{\frac{p}{q}}$ because $\{\lambda_k\}_{k=1}^{\infty}\in l^{p}$ implies that $$\sum_{k=0}^{\infty}c_k\dfrac{\widehat{\Omega}_{r}
(z_{k})}{\sigma^{\frac{q}{p}}(z_k)(1-|z_k|^{2})^{2(\frac{q-p}{p})}}
\le C\|i\|_{A_{\sigma}^{p}\rightarrow L^{q}}^{q}\|\{c_k\}\|_{l^\frac{p}{q}}.$$ Hence by duality argument,we have $$\bigg\|\bigg\{\dfrac{\widehat{\Omega}_{r}(z_{k})}
{\sigma^{\frac{q}{p}}(z_k)(1-|z_{k}|^{2})^{2(\frac{q-p}{p})}}\bigg\}_{k=1}
^{\infty}\bigg\|_{l^{\frac{p}{p-q}}}\le C \|i\|_{A_{\sigma}^{p}\rightarrow L^{p}(\Omega)}^{q}.$$
Finally, we will prove the implication $(a)\Rightarrow (b)$. Let us consider a bounded sequence $\{f_{n}\}_{n=1}^{\infty}$ in $A_{\sigma}^{p}$ such that $f_{n}\longrightarrow 0$ uniformly on each compact subset of $\mathbb{D}$. Let $F$ be any compact subset of $\mathbb{D}$ and $\Omega_F$ be the restriction of $\Omega$ to $F$. Then we have
\begin{align}\label{aas10}
 \int_{\mathbb{D}}|f_n(z)|^{q}d\Omega(z) &=\int_{F}+\int_{\mathbb{D}\setminus F}|f_n(z)|^{q}d\Omega(z)\notag\\ &=I_1+I_2.
\end{align}
Since $f_{n}\rightarrow 0$ uniformly on $F$ as $n\rightarrow \infty$, we have \begin{align*}I_1&=\int_{F}|f_n(z)|^{q}d\Omega(z)\\
&\le C\sup_{z\in F}|f_n(z)|^{q}\longrightarrow 0.
\end{align*} Fix $R\in (0,1)$ and $z\in \mathbb{D}$, and we obtain that $$\dfrac{\widehat{(\Omega_F)}_R(z)}{\sigma^{\frac{q}{p}}(z)}\longrightarrow 0$$ as $F$ extended to $\mathbb{D}$. By the equivalence of $(a)$ and $(d)$, we have $$\dfrac{\widehat{(\Omega_F)}_R^\frac{p}{p-q}(z)}{\sigma^{\frac{q}{p-q}}(z)}\le \dfrac{\widehat{(\Omega_F)}_R^{\frac{p}{p-q}}(z)}{\sigma^{\frac{q}{p-q}}(z)}\in L^{1}(\mathbb{D}).$$
Therefore,
\begin{align}\label{aas11}
 I_2&=\int_{\mathbb{D}}|f_n(z)|^{q}d\Omega_{F}(z)\notag\\&\le C\sup_{n}\|f_n\|_{p,\sigma}^{q}\bigg\|\dfrac{\widehat{(\Omega_F)
 }_R(z)}{\sigma^{\frac{q}{p}}(z)}\bigg\|_{\frac{p}{p-q}}\notag\\&\longrightarrow 0\;\;\; \text{as}\; F \;\text{extended to} \;\mathbb{D},
\end{align} above follows from (\ref{a1}) and the dominated convergence theorem. Hence $$\lim_{n\rightarrow \infty}  \int_{\mathbb{D}}|f_n(z)|^{q}d\Omega(z)=0$$ implies that $\Omega$ is a vanishing $(p,q,\sigma)-$Bergman Carleson measure. This completes the proof.
\end{proof}

\end{document}